\journal{Journal of Complexity}
\newcommand{\dif}{{\rm d}}
\newlength{\overwdth}
\def\abs#1{\ensuremath{\left \lvert #1 \right \rvert}}
\newcommand{\norm}[2][{}]{\ensuremath{\left \lVert #2 \right \rVert}_{#1}}
\DeclareMathOperator{\cost}{cost}
\DeclareMathOperator{\sign}{sign}
\DeclareMathOperator{\Order}{{\mathcal O}}
\newcommand{\tL}{\widetilde{L}}
\newcommand{\reals}{\mathbb{R}}
\newcommand{\natzero}{\mathbb{N}_{0}}
\newcommand{\cb}{\mathcal{B}}
\providecommand{\cc}{\mathcal{C}}
\newcommand{\ci}{\mathcal{I}}
\newcommand{\cu}{\mathcal{U}}
\newcommand{\cw}{\mathcal{W}}
\newcommand{\fC}{\mathfrak{C}}
\newcommand{\fh}{\mathfrak{h}}
\newcommand{\abstol}{\varepsilon}
\newcommand{\zton}{0\!:\!n}
\newcommand{\oton}{1\!:\!n}
\newcommand{\datasites}{x_{0:n}}
\newcommand{\vast}{\bBigg@{4}}
\newcommand{\Vast}{\bBigg@{6}}
\theoremstyle{definition}
\newtheorem*{algoA}{Algorithm $A$}
\newtheorem*{algoM}{Algorithm $M$}
\renewcommand{\cw}{W}
\definecolor{orange}{rgb}{1.0,0.3,0.0}
\definecolor{violet}{rgb}{0.75,0,1}
\definecolor{darkgreen}{rgb}{0,0.6,0}
\newenvironment{FJHchange}{}{} 
\newcommand{\Ixl}{I_{x,l}}
\newcommand{\tell}{\tilde{\ell}}
\newcommand{\chL}{\widehat{L}}
\newcommand{\tgamma}{\widetilde{\gamma}}
\newcommand{\hM}{\widehat{M}}
\DeclareMathOperator{\ninit}{ninit}
\DeclareMathOperator{\oerr}{\overline{err}}
\DeclareMathOperator{\herr}{\widehat{err}}
\newtheorem{theorem}{Theorem}
\newtheorem{prop}[theorem]{Proposition}
\newcommand{\funappxg}{\texttt{funappx\_g}\xspace}
\newcommand{\funming}{\texttt{funmin\_g}\xspace}
\newcommand{\fminbnd}{\texttt{fminbnd}\xspace}
\newcommand{\minfi}{\min\limits_{i \in 0:n} f(x_i)} 
\newcommand{\minfii}{\min(f(x_{i-1}), f(x_i))} 
\begin{document}

\begin{frontmatter}

\title{Local Adaption for Approximation and Minimization of Univariate Functions}
\author{Sou-Cheng T.~Choi}
\author{Yuhan Ding}
\author{Fred J.~Hickernell}
\address{Department of Applied Mathematics, Illinois Institute of Technology,
RE 208, 10 West 32$^{\text{nd}}$ Street, Chicago, Illinois, 60616,
USA}
\ead{hickernell@iit.edu}
\author{Xin Tong}
\address{Department of Mathematics, Statistics, and Computer Science, University of
Illinois at Chicago, Room 322 SEO, 851 S. Morgan Street, Chicago, Illinois, 60607,
USA}

\begin{abstract}
Most commonly used \emph{adaptive} algorithms for univariate real-valued function approximation and global
minimization lack theoretical guarantees. Our new locally adaptive algorithms
are guaranteed to provide answers that satisfy a user-specified absolute
error tolerance for a cone, $\cc$, of non-spiky input functions in the Sobolev space
$\cw^{2,\infty}[a,b]$. Our algorithms automatically determine where to sample the
function---sampling more densely where the second derivative is larger. The
computational cost of our algorithm for approximating a univariate function~$f$ on a
bounded interval with $L^{\infty}$-error no greater than~$\abstol$ is
$\smash{\Order\Bigl(\sqrt{\norm[\frac12]{f''}/\abstol}\Bigr)}$ as $\abstol \to 0$.
This is the
same order as that of the best function approximation algorithm for functions in~$\cc$.
The computational cost of our global minimization algorithm is of the same order
and the cost can be substantially less if~$f$
significantly exceeds its minimum over much of the domain. Our
Guaranteed Automatic Integration Library (GAIL) contains these new algorithms. We
provide numerical experiments to illustrate their superior performance.
\end{abstract}

\begin{keyword}
adaption \sep automatic \sep computational complexity \sep function approximation
\sep function recovery \sep global minimization \sep nonlinear optimization
\MSC[2010]  65D05 \sep 65D07 \sep 65K05 \sep 68Q25
\end{keyword}

\end{frontmatter}

\section{Introduction} \label{sec:intro}

Our goal is to reliably solve univariate function approximation and global minimization
problems by adaptive algorithms. We prescribe a suitable set, $\cc$, of continuously
differentiable, real-valued functions defined on a finite interval $[a,b]$.  Then, we
construct algorithms $A:(\cc,(0,\infty)) \to L^{\infty}[a,b]$ and $M:
(\cc,(0,\infty)) \to \reals$ such that for any $f \in \cc$ and any error
tolerance $\abstol > 0$,
\begin{gather} \norm{f - A(f,\abstol)} \le \abstol,
\tag{APP} \label{appxprob} \\
0 \le M(f,\abstol) - \min_{a \le x \le b} f(x) \le
\abstol. \tag{MIN} \label{optprob}
\end{gather}
Here, $\norm{\cdot}$ denotes the
$L^{\infty}$-norm on $[a,b]$, i.e., $\norm{f} =\sup_{x\in[a,b]} |f(x)|$. The algorithms~$A$ and~$M$ depend only on function
values.

Our algorithms proceed iteratively until their data-dependent stopping
criteria are satisfied. The input functions are sampled nonuniformly over
$[a,b]$, with the sampling density determined by the function data.
We call our algorithms \emph{locally adaptive}, to distinguish them from
globally adaptive algorithms that have a fixed sampling pattern and only the
sample size determined adaptively.

\subsection{Key Ideas in Our Algorithms} \label{subsec:keyideas}

Our algorithms $A$ and $M$ are based on a \emph{linear spline}, $S(f,\datasites)$
defined on $[a,b]$.  Let ${0\!:\!n}$ be shorthand for $\{0, \ldots, n\}$, and let
$\datasites$ be any
ordered sequence of $n+1$ points that includes the endpoints of the interval,
i.e., $a=:x_0 <x_1 < \cdots < x_{n-1} < x_{n}:=b$. We call such a sequence a
\emph{partition}. Then given any $\datasites$ and any $i \in \oton$, the linear
spline  is defined for $x \in [x_{i-1},x_i]$ by
\begin{align} \label{splinedef}
   S(f,\datasites)(x) :=
   \frac{x-x_i}{x_{i-1} - x_i} f(x_{i-1}) + \frac{x-x_{i-1}}{x_{i} - x_{i-1}}f(x_i).
\end{align}
The error of the linear spline is bounded in terms
of the second derivative of the input function as follows \cite[Theorem
3.3]{BurFaiBur16a}:
\begin{equation} \label{appxerrbda}
	\norm[{[x_{i-1},x_i]}]{f - S(f,\datasites)}
	\le \frac{(x_i - x_{i-1})^2\norm[{[x_{i-1},x_i]}]{f''}}{8}, \quad i \in \oton,
\end{equation}
where $\norm[{[\alpha,\beta]}]{f}$ denotes the $L^{\infty}$-norm of $f$ restricted to the
interval $[\alpha,\beta] \subseteq [a,b]$. This error bound leads us to focus on
input functions in the Sobolev space $\cw^{2,\infty}:= \cw^{2,\infty}[a,b] := \{f \in
C^1[a,b] : \norm{f''} < \infty \}$.

Algorithms $A$ and $M$ require upper bounds on $\norm[{[x_{i-1},x_i]}]{f''}$, $i \in
1\!:\!n$,
to make use of~\eqref{appxerrbda}. A nonadaptive algorithm might assume that
$\norm{f''} \le \sigma$, for some known~$\sigma$, and proceed to choose $n =
\bigl \lceil (b-a)\smash{\sqrt{\sigma/(8 \varepsilon)}} \, \bigr \rceil$, $x_i = a +
i(b-a)/n$, $i \in 0\!:\!n$. Providing an upper bound on $\norm{f''}$ is
often impractical, and so we propose adaptive algorithms that do not
require such information.

However, one must have some a priori information about $f \in \cw^{2,\infty}$
to construct successful algorithms for \eqref{appxprob} or \eqref{optprob}.
Suppose that algorithm $A$ satisfies \eqref{appxprob} for the zero function $f=0$, and
$A(0,\varepsilon)$ uses the data sites $x_{0:n}\subset [a,b]$. Then one can
construct a \emph{nonzero} function $g \in \cw^{2,\infty}$ satisfying $g(x_i) =
0$, $i\in \zton$ but with $\smash{\norm{g- A(g,\abstol)}} = \smash{\norm{g -
A(0,\abstol)}} > \varepsilon$.

Our set $\cc \subset \cw^{2,\infty}$ for which $A$ and $M$ succeed
includes only those functions whose second derivatives do not change dramatically
over a short distance. The precise definition of $\cc$ is given in Section
\ref{sec:cone}. This allows us to use second-order divided differences to
construct rigorous upper bounds on the linear spline error in
\eqref{appxerrbda}. These data-driven error bounds inform the stopping criteria
for Algorithm $A$ in Section \ref{subsec:appxalgo} and Algorithm $M$ in Section
\ref{sec:minalgo}.

The computational cost of Algorithm $A$ is analyzed in Section
\ref{subsec:appxcost} and is shown to be
$\smash{\Order\Bigl(\sqrt{\norm[\frac12]{f''}/\abstol}\Bigr)}$ as $\abstol \to 0$. Here,
$\norm[\frac12]{\cdot}$ denotes the $L^{\frac12}$-quasi-norm, a special case of the $L^{p}$-quasi-norm, $\norm[p]{f} :=
\bigl(\int_a^b \abs{f}^p \, \dif x \bigr)^{1/p}$, $0 < p < 1$. Since
$\norm[\frac12]{f''}$ can be much smaller than $\norm{f''}$, locally adaptive
algorithms can be more efficient than globally adaptive algorithms, whose
computational costs are proportional to $\sqrt{\norm{f''}/\abstol}$. The
computational complexity of \eqref{appxprob} is determined in Section
\ref{subsec:appxcomp} to be
$\Order\Bigl(\smash{\sqrt{\norm[\frac12]{f''}/\abstol}}\Bigr)$ as well.

The computational cost of our optimization algorithm $M$ is analyzed in
Section~\ref{subsec:optcost}. A lower bound on the computational complexity of
\eqref{optprob} is a subject for future investigation.

Our algorithms are implemented in our MATLAB~\cite{MAT9.1} Guaranteed Automatic
Integration Library (GAIL) \cite{ChoEtal15a}. Section \ref{sec:examples}
provides numerical examples of our algorithms and compares their
performances with MATLAB's and Chebfun's algorithms. We note cases where our
algorithms are successful in meeting the error tolerance, and other
algorithms are not.

\subsection{Related Work on Adaptive Algorithms}

Adaptive algorithms relieve the user of having to specify the number of samples
required. Only the desired error tolerance is needed. Existing adaptive
numerical algorithms for function approximation, such as the MATLAB toolbox
Chebfun \citep{TrefEtal16a}, succeed for some functions, but fail for others. No
theory explains for which $f$ Chebfun succeeds. A corresponding situation exists
for minimization algorithms, such as \texttt{min} in Chebfun or MATLAB's
built-in \texttt{fminbnd} \citep{Bre13a, For77}.

Our theoretically justified Algorithms $A$ and $M$ build upon the ideas used to
construct the adaptive algorithms in \cite{HicEtal14b, Din15a, HicEtal14a,
HicJim16a, Jia16a, JimHic16a,Ton14a}. In all those cases, a
cone, $\cc$, of input functions is identified for which the adaptive algorithms
succeed, just as is done here. However, unlike the algorithms in \cite{HicEtal14b, Din15a,
HicEtal14a,Ton14a}, the definition of $\cc$ here does not
depend on a weaker norm. Also, unlike
the globally adaptive approximation and optimization algorithms in
\cite{HicEtal14b,Ton14a}, the algorithms proposed here are locally adaptive,
sampling the interval $[a, b]$ nonuniformly.

Novak \cite{Nov96a} summarizes the settings under which adaption may provide an
advantage over nonadaption. For linear problems, such as~\eqref{appxprob},
adaption has no advantage if the set of functions being considered is symmetric
and convex \cite[Theorem 1]{Nov96a}, \cite[Chapter 4, Theorem
5.2.1]{TraWasWoz88}, \cite{Woz88a}.
The cone $\cc$ defined for our approximation problem
\eqref{appxprob} is symmetric, but \emph{not} convex. Plaskota et~al.~\cite{PlaEtal08a}
have developed adaptive algorithms for functions with singularities. Our
algorithms are not designed for such functions. Rather they are
designed to be efficient when the second derivative is large in a small part of
the domain.

Plaskota \cite{Pla15a} has developed an adaptive Simpson's algorithm for
approximating $\int_a^b f(x) \, \dif x$ assuming that the fourth derivative $f^{(4)}(x) \ge 0$ for all
$x \in [a,b]$. His algorithm relies on divided differences, like ours do. His
error is asymptotically proportional to $\norm[\frac14]{f^{(4)}}$, which is
analogous to the $\norm[\frac12]{f''}$ that appears in our analysis.  Horn~\cite{Hor06a}
has developed an optimization algorithm for Lipschitz continuous functions that does not
require knowledge of the Lipschitz constant.

There is a significant literature on theoretically justified algorithms based on
interval arithmetic \cite{MoKeCl09, Rum10a}, which are implemented in
INTLAB~\cite{Rum99a}. This approach assumes that functions have interval inputs
and outputs. We focus on the more common situation where functions have point
inputs and outputs.

\section{The Cone, $\cc$, of Functions of Interest} \label{sec:cone}

Linear splines \eqref{splinedef} are the foundation for adaptive algorithms $A$
and $M$. To bound the error of the linear spline in \eqref{appxerrbda}, our
algorithms construct data-based upper bounds on $\norm[{[\alpha, \beta]}]{f''}$
in terms of divided differences. For these bounds to hold, we must assume that
$f''(x)$ does not change drastically with respect to a small change in $x$.
These assumptions define our cone of functions, $\cc$, for which our algorithms
ultimately apply.

Let $p$ denote the quadratic Lagrange interpolating polynomial at the nodes
$\{\alpha, (\alpha + \beta)/2, \beta\}$, which may be written as
\begin{align}
\nonumber
   p(x) & := f(\alpha) + \frac{(x-\alpha)[f(\beta) - f(\alpha)]}{\beta - \alpha}  +
   (x-\alpha)(x-\beta) D(f,\alpha,\beta),
\\ D(f,\alpha,\beta) &:= \frac{2f(\beta) - 4f((\alpha + \beta)/2))
	+ 2f(\alpha)}{(\beta - \alpha)^2}. \label{divdiffdef}
\end{align}
For any $f \in \cw^{2,\infty}$, the function $f - p$ has at least three distinct
zeros on $[\alpha,\beta]$, so $f' - p'$ has at least two distinct zeros on
$(\alpha,\beta)$.  Specifically, there exist $\xi_\pm$ with $\alpha < \xi_- < (\alpha +
\beta)/2 < \xi_+ < \beta$ with $f'(\xi_\pm) - p'(\xi_{\pm}) = 0$. Thus,
\begin{align}
     \norm[-\infty,{[\alpha,\beta]}]{f''}
   & := \inf_{\alpha \le \eta < \zeta \le \beta} \abs{\frac{f'(\zeta) - f'(\eta)}{\zeta - \eta}}
   \nonumber
\\ & \le \abs{\frac{f'(\xi_+) - f'(\xi_-)}{\xi_+ - \xi_-}}
   = \abs{\frac{p'(\xi_+) - p'(\xi_-)}{\xi_+ - \xi_-}} = 2 \abs{D(f,\alpha,\beta)}  \nonumber
\\ & \le \sup_{\alpha \le \eta < \zeta \le \beta} \abs{\frac{f'(\zeta) - f'(\eta)}{\zeta - \eta}}
=: \norm[{[\alpha,\beta]}]{f''}. \label{NDDbdm}
\end{align}
This inequality tells us that twice the divided difference,
$2\abs{D(f,\alpha,\beta)}$, is a \emph{lower} bound for
$\norm[{[\alpha,\beta]}]{f''}$, which by itself is not helpful. But
$2\abs{D(f,\alpha,\beta)}$ is an \emph{upper} bound for
$\norm[-\infty,{[\alpha,\beta]}]{f''}$. The cone of interesting functions,
$\cc$, will contain those $f$ for which $\norm[{[\alpha,\beta]}]{f''}$ is not
drastically greater than the maximum of $\norm[-\infty,{[\beta - h_-,
\alpha]}]{f''}$ and $\norm[-\infty,{[\beta, \alpha+ h_+]}]{f''}$, where $h_{\pm} > \beta -
\alpha$.

The cone $\cc$ is defined in terms of two numbers: an integer $n_{\ninit} \ge 5$
and a  number $\fC_0 \ge 1$. Let
\begin{equation}
\label{hCdef}
\fh := \frac{3(b-a)}{n_{\ninit}-1}, \qquad \fC(h) := \frac{\fC_0 \fh}{\fh - h}
\mbox{\, for \,} 0<h<\fh.
\end{equation}
For any $[\alpha, \beta] \subset [a,b]$ and any $h_{\pm}$ satisfying $0 <
\beta - \alpha < h_{\pm} < \fh$, define
\begin{equation} \label{tbdef}
B(f'',\alpha,\beta,h_-,h_+) :=
\begin{cases}
   \max\bigl(\fC(h_{-}) \norm[-\infty,{[\beta-h_-,\alpha]}]{f''},\fC(h_{+})
   \norm[-\infty,{[\beta, \alpha+h_+]}]{f''}\bigr),
\\                                             \hspace{27ex} a \le \beta- h_- <  \alpha + h_+ \le b,
\\ \fC(h_{-}) \norm[-\infty,{[\beta-h_-,\alpha]}]{f''},  \quad a \le \beta- h_- < b< \alpha + h_+,
\\ \fC(h_{+}) \norm[-\infty,{[\beta, \alpha+h_+]}]{f''},
\quad \beta- h_- <  a  < \alpha + h_+ \le b.
\end{cases}
\end{equation}
\begin{multline} \label{conedef}
 \cc := \Bigl \{
 f  \in    \cw^{2,\infty}:   \norm[{[\alpha,\beta]}]{f''}  \le B(f'',\alpha,\beta,h_-,h_+)
 \text{ for all } [\alpha,\beta] \subset [a,b]
\\ \text{and } h_{\pm} \in (\beta - \alpha,  \fh)  \Bigr \}.
\end{multline}
The set $\cc$ is a cone because $f \in \cc \implies cf \in \cc$ for all real
$c$. The integer $n_{\ninit}$ is the initial number of subintervals in
Algorithms $A$ and $M$. \begin{FJHchange}The parameter $\fC_0$ is some number
no less
than one for
which
\[
\lim_{h \to 0} \norm[{[x-h,x+h]}]{f''} \le
\fC_0 \lim_{h \to 0} \norm[-\infty,{[x-h,x+h]}]{f''}, \qquad \forall x \in (a,b), \ f \in \cc.
\]\end{FJHchange}
Increasing either $n_{\ninit}$ or $\fC_0$ expands the cone to
include more functions.

Figure \ref{fig:ConeDef} depicts the second derivative of a typical function in
$\cw^{2,\infty}$.  In this figure
$\norm[-\infty,{[\beta-h_-,\alpha]}]{f''} = \abs{f''(\beta - h_-)} = 0$, which means that the
behavior of $f''$ to the left of $[\alpha,\beta]$ cannot help provide an upper bound on $
\norm[{[\alpha,\beta]}]{f''}$.  However, $\norm[-\infty,{[\beta,\alpha+h_+]}]{f''} =
\abs{f''(\alpha + h_+)} > 0$, so this $f$ may lie in the cone $\cc$ provided that
$\fC(h_+)$ is large enough.  The possibility of points in $[a,b]$ where~$f''$ vanishes
motivates the definition of $B(f'',\alpha,\beta,h_-,h_+)$ to depend on the behavior of $f''$
to both the left and right of $[\alpha,\beta]$.  One may note that if $f''$ vanishes at two
points that are close to each other or at a point that is close to either $a$ or~$b$, then
$f$ will lie outside $\cc$.  The definition of ``close'' depends on
$(b-a)/n_{\ninit}$.

\begin{figure}
	\centering
	\includegraphics[width = 8 cm]{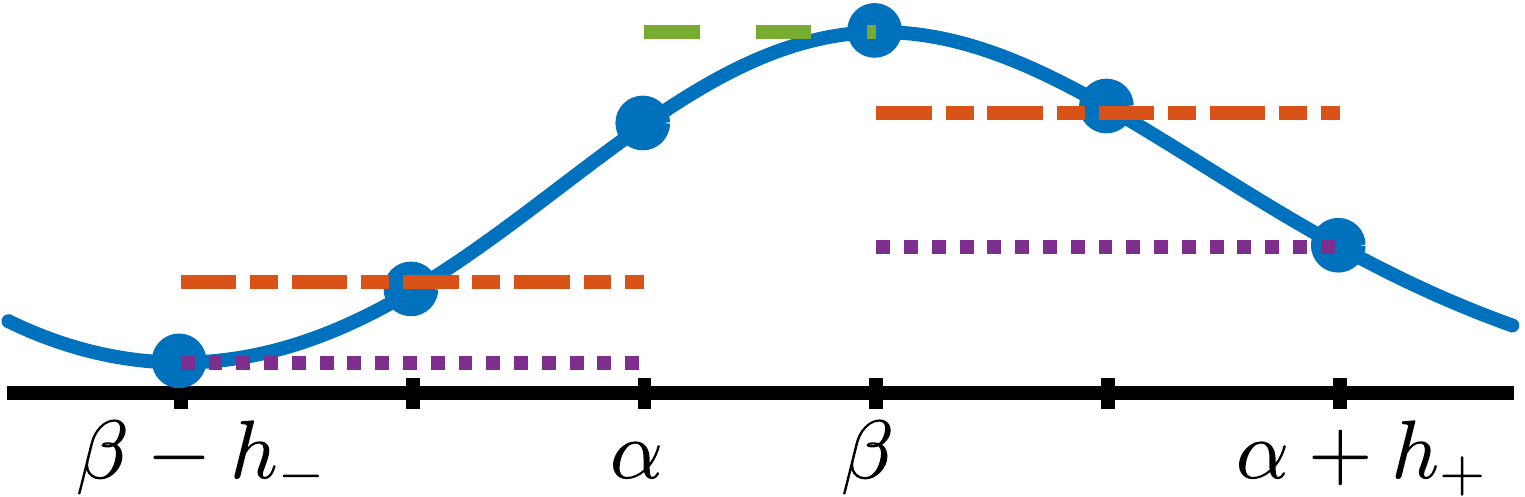}
	\caption{For some sample $f$, a plot of $\abs{f''(x)}$ (solid),
	$\norm[{[\alpha,\beta]}]{f''}$ (dashed),
   $\norm[-\infty,{[\beta - h_-,\alpha]}]{f''}$ and $\norm[-\infty,{[\beta,
    \alpha+h_+]}]{f''}$ (dotted), and
    $2\abs{D(f,\beta - h_-,\alpha)}$ and $2\abs{D(f,\beta,\alpha+h_+)}$ (dot-dashed).
    All figures in this paper are reproducible by
    \texttt{LocallyAdaptivePaperFigs.m} in GAIL~\cite{ChoEtal15a}.
    \label{fig:ConeDef}}
\end{figure}

We give an example of a family of functions whose members lie
inside $\cc$ if they are not too spiky.
Consider the following hump-shaped function defined on $[-1,1]$, whose
second derivative has jump discontinuities:
\begin{align} \label{f3def}
f_1(x) & = \begin{cases} \displaystyle
   \frac{1}{2\delta^2} \Bigl [4 \delta^2 + (x-c)^2 + (x-c-\delta)\abs{x-c-\delta}
\\ \qquad \qquad
    - (x-c+\delta)\abs{x-c+\delta} \Bigr ], & \abs{x-c} \le 2\delta,
\\ 0, & \text{otherwise},
\end{cases}
\\ \nonumber
f''_1(x) & =
\begin{cases} \displaystyle
    \frac{1}{\delta^2} [1 + \sign(x-c-\delta) - \sign(x-c+\delta)\begin{FJHchange}
    ]
    \end{FJHchange}, & \quad \abs{x-c} \le 2\delta,
\\ 0, & \quad \text{otherwise}.
\end{cases}
\end{align}
Here $c$ and $\delta$ are parameters satisfying $-1 \le c-2 \delta < c+ 2\delta
\le 1$. This function and its second derivative are shown in Figure~\ref{f3fig}(a)
for $-c=\delta = 0.2$.

\begin{figure}[tb]
\centering
\begin{tabular}{cc}
\includegraphics[width=5.7cm]{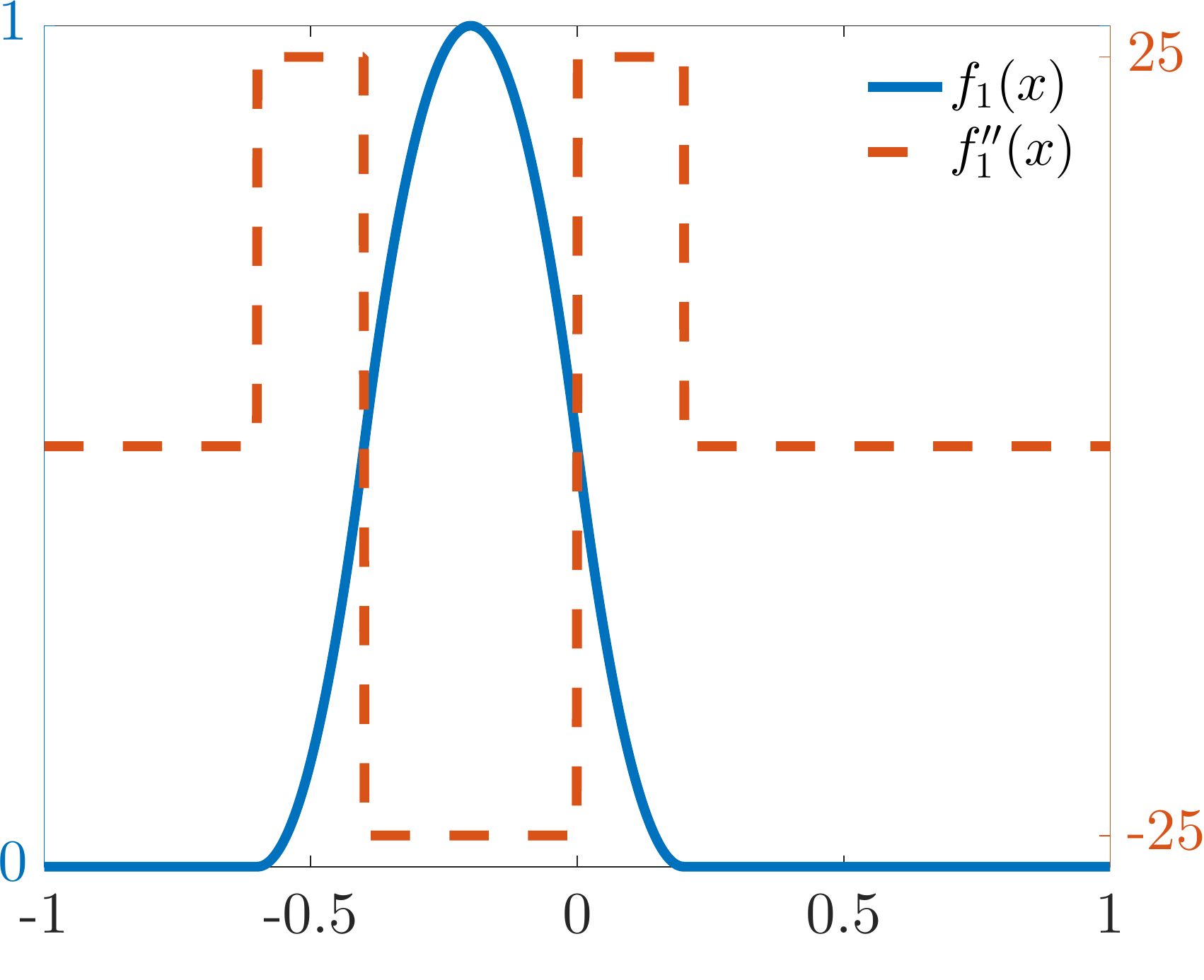}
& \includegraphics[width=5.7cm]{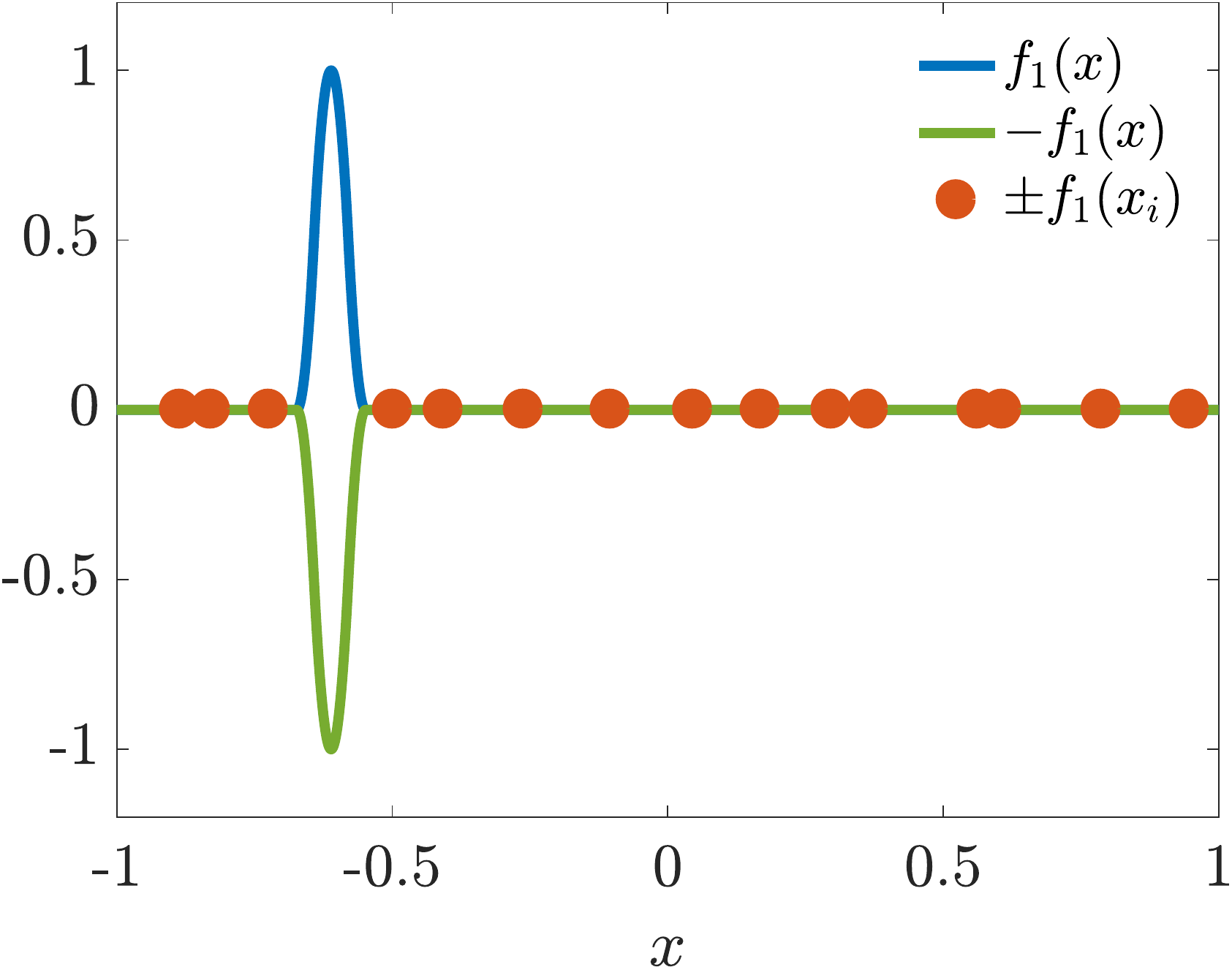}
\\[1ex] (a) & (b)
\end{tabular}
\caption{(a) The example $f_1$ with $-c=\delta = 0.2$ and its piecewise constant
second derivative.
(b) The fooling functions $\pm f_1$ used to prove~\eqref{lowbdW} (with different choices
of $c$ and $\delta$). The case
$n=15$ is shown. \label{f3fig}}
\end{figure}

If the hump is wide enough, i.e., $\delta \ge 2 \fh$, then $f_1 \in \cc$ for any
choice of $\fC_0 \ge 1$. For any $[\alpha,\beta]\subseteq [-1,1]$ and
$h_{\pm}$ satisfying the conditions in the definition of~$\cc$ in~\eqref{conedef},
it follows that
\begin{align*}
    & \norm[{[\alpha,\beta]}]{f''_1} = \frac{1}{ \delta^{2} } =
\begin{cases}
\norm[-\infty,{[\beta,\alpha+h_+]}]{f''_1}  
   & \text{if } \alpha\text{ or } \beta \in [c - 2\delta,  c - 1.5\delta]
\\ & \quad\quad \cup [c - \delta,  c - 0.5\delta] \cup [c + \delta, c+1.5\delta],
\\ \norm[-\infty,{[\beta - h_-,\alpha]}]{f''_1} 
   & \text{if } \alpha\text{ or } \beta \in [c - 1.5\delta,  c - \delta]
\\ & \quad\quad  \cup [c - 0.5 \delta,  c + \delta] \cup [c + 1.5\delta, c+2\delta].
\end{cases}
\end{align*}
Thus, $B(f'',\alpha,\beta,h_-,h_+)\ge\norm[{[\alpha,\beta]}]{f''_1}$ for $\beta
\ge c - 2\delta$ or $\alpha \le c + 2\delta$.
For $[\alpha,\beta] \subset [-1,c
- 2\delta) \cup (c+2\delta, 1]$, it follows that $\norm[{[\alpha,\beta]}]{f''_1} =
0$, so $B(f'',\alpha,\beta,h_-,h_+)\ge\norm[{[\alpha,\beta]}]{f''_1}$
automatically. Thus, the definition of the cone is satisfied.

However, if the hump is too narrow, i.e., $\delta < 2 \fh$, the function $f_1$
is too spiky to lie in the cone $\cc$ regardless of how $\fC_0$ is defined. For
$\alpha$, $\beta$, and $h$ satisfying
\[
0 \ <  \ c - 1.5\delta -\alpha  \ = \  \beta - c+ 1.5\delta \ <  \ 0.5\delta \  <  \ c-1.5\delta- \beta + h \ < \ \fh,
\]
it follows that
\begin{gather*}
   \beta -  h  \ < \ c - 2 \delta \ < \ \alpha \ < \ c - 1.5\alpha  \ < \ \beta \ < \ c - \delta \ < \ \alpha + h,
\\ \norm[-\infty,{[\beta - h,\alpha]}]{f''_1} \   =  \ \norm[-\infty,{[\beta,\alpha+h]}]{f''_1} \ = \ 0
\ < \ \delta^{-2} \ = \ \norm[{[\alpha,\beta]}]{f''_1}.
\end{gather*}
This violates the definition of $\cc$. This example illustrates how the choice
of $n_{\ninit}$, or equivalently $\fh$, influences the width of a spiky function and
determines whether it lies in $\cc$.

\section{The Function Approximation Algorithm, $A$}\label{sec:fappx}

\subsection{Algorithm $A$} \label{subsec:appxalgo}

The idea of Algorithm $A$ is to use divided differences to provide upper bounds
on $\norm[-\infty,{[\beta - h_-, \alpha]}]{f}$ and $\norm[-\infty{[\beta,
\alpha+h_+]}]{f}$ via \eqref{NDDbdm}, which then provide an upper bound on
$\norm[{[\alpha,\beta]}]{f}$ via the definition of the cone, $\cc$, in
\eqref{conedef}. This in turn yields an upper bound on the spline error via
\eqref{appxerrbda}. After stating the algorithm, its effectiveness is proven.

\begin{algoA} \label{AlgoA}
For some finite interval $[a,b]$, integer $n_{\ninit}\ge 5$, and constant $\fC_0 \ge
1$, let $\fh$ and $\fC(h)$ be defined as in~\eqref{hCdef}. Let $f:[a,b] \to
\reals$ and $\abstol >0$ be user inputs. Define the number of subintervals,
$n=n_{\ninit}$, and the iteration number, $l = 0$. Define the initial partition
of equally spaced points, $x_{0:n}$, and an index set of subintervals:
\[
h_0 =\frac{b-a}{n}, \qquad  x_i=a+ ih_0,  \qquad\ i \in \zton, \qquad \mathcal{I} = 1\!:\!(n-1).
\]

\begin{enumerate}[\em Step 1.]

\item \label{stage1} \emph{Check for convergence.} For all $i \in \ci$ compute
\begin{equation} \label{oerrdef}
\oerr_i = \frac{1}{8} \fC(3h_l)\abs{f(x_{i+1}) - 2f(x_i) + f(x_{i-1})}.
\end{equation}
Let $\widetilde{\mathcal{I}} = \left\{i \in \mathcal{I}: \oerr_i  > \abstol \right\}$
be the index set for those $\oerr_i $ that are too large.   If $\widetilde{\mathcal{I}} =
\emptyset$, return the linear spline $A(f,\abstol) = S(f, x_{0:n})$ and terminate
the algorithm. Otherwise, continue to the next step.

\item \label{stage2} \emph{Split the subintervals as needed.}
Update the present partition, $x_{0:n}$, to include the subinterval midpoints
\[
  \frac{x_{i-2} +x_{i-1}}{2}, \ \frac{x_{i-1} +x_{i}}{2},
\ \frac{x_{i} +x_{i+1}}{2}, \  \frac{x_{i+1} +x_{i+2}}{2}, \qquad i \in \widetilde{\ci}.
\]
(The leftmost midpoint is only needed for $i \ge 2$, and the rightmost midpoint
is only needed for $i \le n-2$.) Update the set $\ci$ to consist of the new
indices corresponding to the old points
\[
x_{i-1}, \ \frac{x_{i-1} +x_{i}}{2}, \ \frac{x_{i} +x_{i+1}}{2}, \  x_{i+1},
\qquad i \in \widetilde{\ci}.
\]
(The point $x_{i-1}$ is only included for $ i \ge 2$, and the point $x_{i+1}$ is
only included for $i \le n-2$.) Let $l \leftarrow l+1$ and $h_l = h_{l-1}/2$.  Return to
Step~\ref{stage1}.
\end{enumerate}
\end{algoA}

\begin{theorem} \label{thm:algAworks}
	Algorithm $A$ defined above satisfies~\eqref{appxprob} for functions in the
	cone $\cc$ defined in~\eqref{conedef}.
\end{theorem}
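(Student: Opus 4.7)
The plan is to bound the spline error on each subinterval of the final partition by $\oerr$ values the algorithm has certified as at most $\abstol$, combine these with \eqref{appxerrbda} and the cone property \eqref{conedef}, and then take the maximum over subintervals to conclude $\norm{f - A(f,\abstol)} \le \abstol$.

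First, I would establish by induction on the iteration number $l$ the invariant that whenever $i \in \ci$ at the start of Step~\ref{stage1} at iteration $l$, the points $x_{i-1}, x_i, x_{i+1}$ in the current partition are equally spaced at $h_l$ (with the one-sided version when $i$ is adjacent to $a$ or $b$). The uniform initial partition is the base case. For the inductive step, each of the four kinds of indices Step~\ref{stage2} inserts into $\ci$ for a flagged $i$---the new indices of $x_{i-1}, x_{i+1}$ and of the midpoints of $[x_{i-1},x_i]$, $[x_i,x_{i+1}]$---acquires, in the refined partition, two neighbors at distance $h_l/2 = h_{l+1}$, because Step~\ref{stage2} has just inserted midpoints into the four adjacent old subintervals of width $h_l$. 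With this invariant, the identity
\[
   \abs{f(x_{i+1}) - 2f(x_i) + f(x_{i-1})} \;=\; 2 h_l^2\, \abs{D(f, x_{i-1}, x_{i+1})}
\]
together with \eqref{NDDbdm} converts $\oerr_i \le \abstol$ into
\[
   \fC(3h_l) \, \norm[-\infty,{[x_{i-1},x_{i+1}]}]{f''} \;\le\; \frac{8\abstol}{h_l^2}.
\]

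Second, I would apply the cone property \eqref{conedef} subinterval by subinterval. For a subinterval $[\alpha,\beta]$ of the final partition of width $h$, take $h_\pm = 3h < \fh$. Then the flanking intervals $[\beta - 3h,\alpha]$ and $[\beta,\alpha+3h]$ each have length $2h$, and combining \eqref{conedef} with \eqref{appxerrbda} gives
\[
   \norm[{[\alpha,\beta]}]{f - S} \;\le\; \frac{h^2}{8}\, B(f'', \alpha, \beta, 3h, 3h).
\]
Bounding each $L^{-\infty}$-seminorm appearing in $B$ by the scaled $\oerr$ bound from the first step then yields $\norm[{[\alpha,\beta]}]{f - S} \le \abstol$. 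The one-sided branches of \eqref{tbdef} take care of the subintervals adjacent to $a$ and $b$.

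The main obstacle is guaranteeing that the required $\oerr$ witnesses actually exist for every subinterval. Because the algorithm retires indices from $\ci$ once their neighborhoods pass the test, the flanking triples of a given subinterval in the final partition need not correspond to indices in the terminal $\ci$; their $\oerr$ witnesses may come from an earlier iteration. My plan is to argue by descent on the creation iteration: if $[\alpha,\beta]$ of width $h_l$ survives to the final partition, then at iteration $l$ the two flanking triples at spacing $h_l$ lie in the current partition and, by the structure of Step~\ref{stage2}, each has its middle index in $\ci$---either via the same flagging of $i \in \widetilde{\ci}$ from iteration $l-1$ that created $[\alpha,\beta]$ or via the flagging of a neighbor. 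The corresponding $\oerr$ values must be $\le \abstol$, since otherwise $[\alpha,\beta]$ would sit among the four adjacent subintervals of a flagged index and would itself be subdivided at iteration $l$, contradicting its presence in the final partition. The subcases in which a flanking midpoint is present in the partition but not in $\ci$ at iteration $l$ are handled by falling back to an $\oerr$ witness recorded at an earlier coarser iteration and using the cone bound with a larger choice of $h_\pm$; the resulting extra factor $(h_l/h_{l'})^2 < 1$ keeps the estimate below $\abstol$. Passing to the maximum over all subintervals then yields \eqref{appxprob}.
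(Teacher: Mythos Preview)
Your proposal is correct and follows essentially the same route as the paper. The paper's presentation fixes $x\in[a,b]$, lets $\ell(x)$ be the level of the final subinterval containing $x$, and asserts directly the existence of $l_\pm\le\ell(x)$ at which the left and right flanking $\oerr$ values were certified $\le\abstol$; it then applies the cone bound with $h_\pm$ tied to those (possibly coarser) levels and uses $h_{\ell(x)}^2\le h_{l_\pm}^2$ together with the monotonicity of $\fC$---exactly your ``fallback to an earlier coarser iteration'' with the $(h_l/h_{l'})^2$ gain. Your descent argument is precisely what justifies the paper's bare assertion that such $l_\pm$ exist, so the two write-ups differ only in packaging, not in substance.
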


\begin{proof}
For every iteration $l$ and every $i \in \ci$, the definitions in this algorithm
imply that $x_i-x_{i-1} = x_{i+1} - x_i = h_l = 2^{-l}h_0$, and
	\begin{align}
	\oerr_i
	& =  \frac{1}{4} \fC(3h_l) h_l^2 \abs{D(f,x_{i-1},x_{i+1})}
	\qquad \text{by \eqref{divdiffdef}}  \label{divdiffbd}\\
	& \ge \frac{1} 8 \fC(3h_l) h_l^2 \norm[-\infty,{[x_{i-1},x_{i+1}]}]{f''}
	\qquad \text{by \eqref{NDDbdm}}. \label{oerrlobd}
	\end{align}
We show that when  all $\oerr_i$ get small enough, Algorithm $A$ terminates
successfully.

For all $x \in [a,b]$, let $\Ixl$ be the closed interval with width $h_l$
containing $x$ that \emph{might} arise at some stage in Algorithm $A$ as $[x_{i_l-1},
x_{i_l}]$ for some $i_l \in \oton$. (The dependence of $n$ on $l$ is
suppressed.) Specifically this interval is defined for all $ x \in [a,b]$ and $l
\in \mathbb{N}_0$ as
	\begin{equation}\label{Ixldef}
	\Ixl := \left[a+jh_l,a+(j+1) h_l\right], \ \ j=\min \left(\left\lfloor\frac{(x-a)}{h_l}\right\rfloor,
	2^l n_{\ninit}-1 \right ).
	\end{equation}
Let $\ell(x)$ be defined such that $I_{x,\ell(x)}$ is the final subinterval in
Algorithm $A$ that contains $x$ when the algorithm terminates. We need to
establish that $\norm[I_{x,\ell(x)}]{f - S(f)} \le \abstol$ for every $x \in
[a,b]$.

Fix $x \in [a+\fh, b-\fh]$. The proof for $x \in [a,a+\fh) \cup (b-\fh,b]$ is
similar. By \eqref{oerrlobd} there exists some $l_- \le \ell(x)$ for which $I_{x,{l_-}}
=[x_{i_{l_-}-1},x_{i_{l_-}}]$ and
\begin{subequations} \label{datalesstol}
\begin{equation}
\frac{1} 8 \fC(3h_{l_-}) h_{l_-}^2 \norm[-\infty,{[x_{i_{l_-}-3},x_{i_{l_-}-1}]}]{f} \le \oerr_{i_{l_-}-2}
\le \abstol .
\end{equation}
There also exists an $l_+ \le \ell(x)$ such that $I_{x,l_+} =[x_{i_{l_+}-1},x_{i_{l_+}}]$ and
\begin{equation}
\frac{1} 8 \fC(3h_{l_+}) h_{l_+}^2 \norm[-\infty,{[x_{i_{l_+}},x_{i_{l_+}+2}]}]{f}
\le \oerr_{i_{l_+}+1} \le \abstol .
\end{equation}
\end{subequations}
Noting that $x_{i_{l_{\pm}}-1} \le x_{i_{\ell(x) -1}} < x_{i_{\ell(x)}} \le x_{i_{l_{\pm}}} $, we may
conclude that
\begin{align*}
   & \norm[I_{x,\ell(x)}]{f - S(f)} \le \frac{1}{8}h_{\ell(x)}^2\norm[I_{x,\ell(x)}]{f''} \qquad
   \text{by \eqref{appxerrbda}}
\\ & \le \frac{1}{8} h_{\ell(x)}^2 B(f,x_{i_{\ell(x) - 1}},
x_{i_{\ell(x)}},x_{i_{\ell(x)}}-x_{i_{l_-}-3},x_{i_{l_+}+2} - x_{i_{\ell(x) - 1}}) \quad
 \text{by  \eqref{conedef}}
\\ & \le \frac{1}{8} h_{\ell(x)}^2 \max \Bigl(\fC(x_{i_{\ell(x)}}-x_{i_{l_-}-3})
\norm[-\infty,{[x_{i_{l_-}-3},x_{i_{\ell(x) - 1}}]}] {f''}, \\
&\qquad  \fC(x_{i_{l_+}+2} - x_{i_{\ell(x)}-1})
\norm[-\infty,{[x_{i_{\ell(x)}}, x_{i_{l_+}+2}]}] {f''}  \Bigr) \quad
\text{by the definition of $B$ in \eqref{tbdef}}\\
& \le  \max \Bigl( \frac{ h_{l_-}^2}{8}\fC(3h_{l_-})
\norm[-\infty,{[x_{i_{l_-}-3},x_{i_{l_-}- 1}]}] {f''},
   \ \frac{h_{l_+}^2}{8}\fC(3h_{l_+})   \norm[-\infty,{[x_{i_{l_+}}, x_{i_{l_+}+2}]}] {f''}  \Bigr) \\
& \qquad \qquad  \text{because $h_{\ell(x)} \le h_{l_{\pm}}$ and $\fC$ is
	non-decreasing}\\
& \le \abstol \qquad \text{by \eqref{datalesstol}}.
\end{align*}
This concludes the proof.\end{proof}

Figure \ref{f3foolplot}(a) displays the function $-f_1$ defined in
\eqref{f3def} for a certain choice of parameters, along with the data used to
compute the linear spline approximation $A(-f_1,0.02)$ by the algorithm
described above. Note that $-f_1$ is sampled less densely where it is flat.

\begin{figure}[t]
	\begin{tabular}{cc}	
		\includegraphics[width=5.7cm]{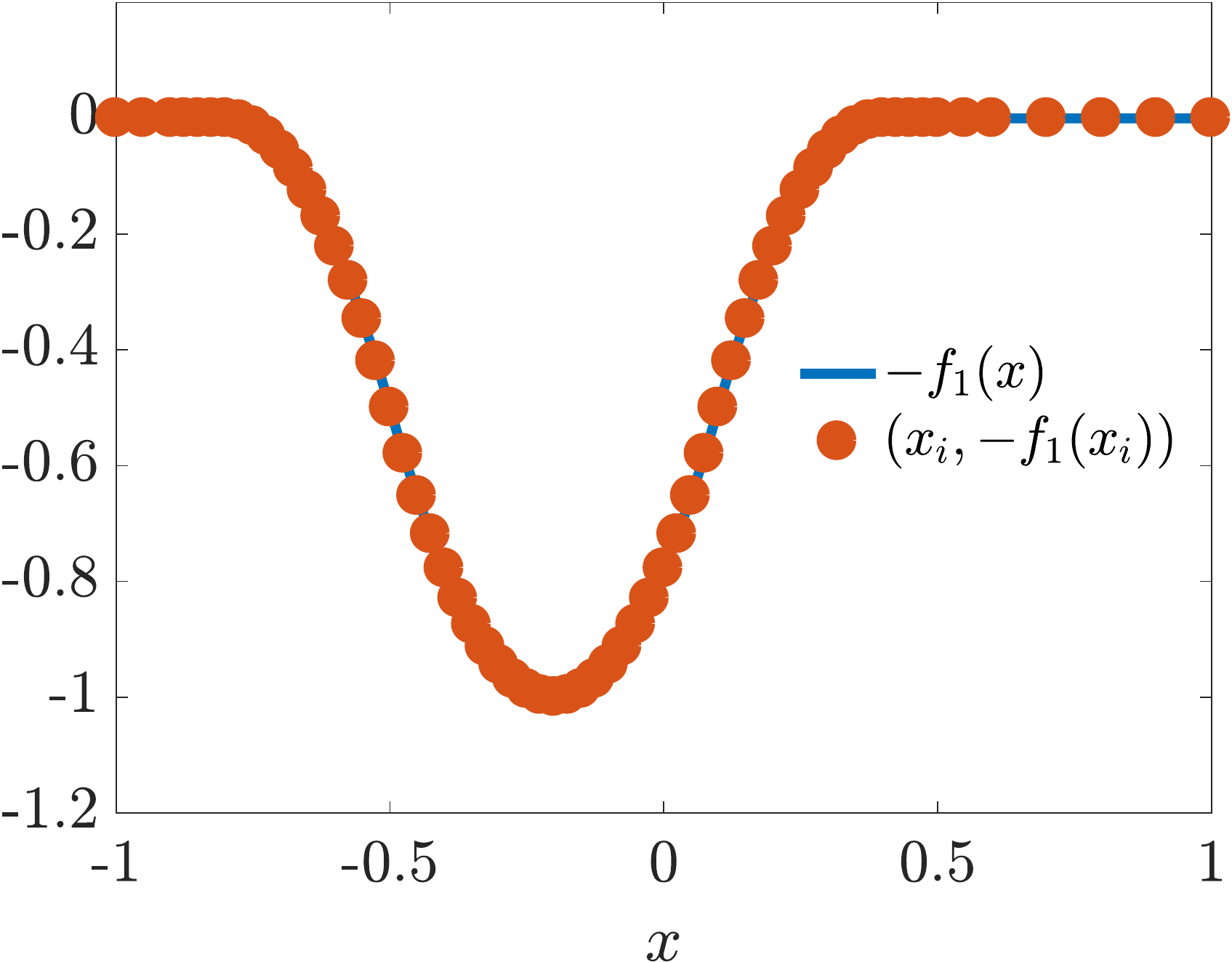}
		&\includegraphics[width=5.7cm]{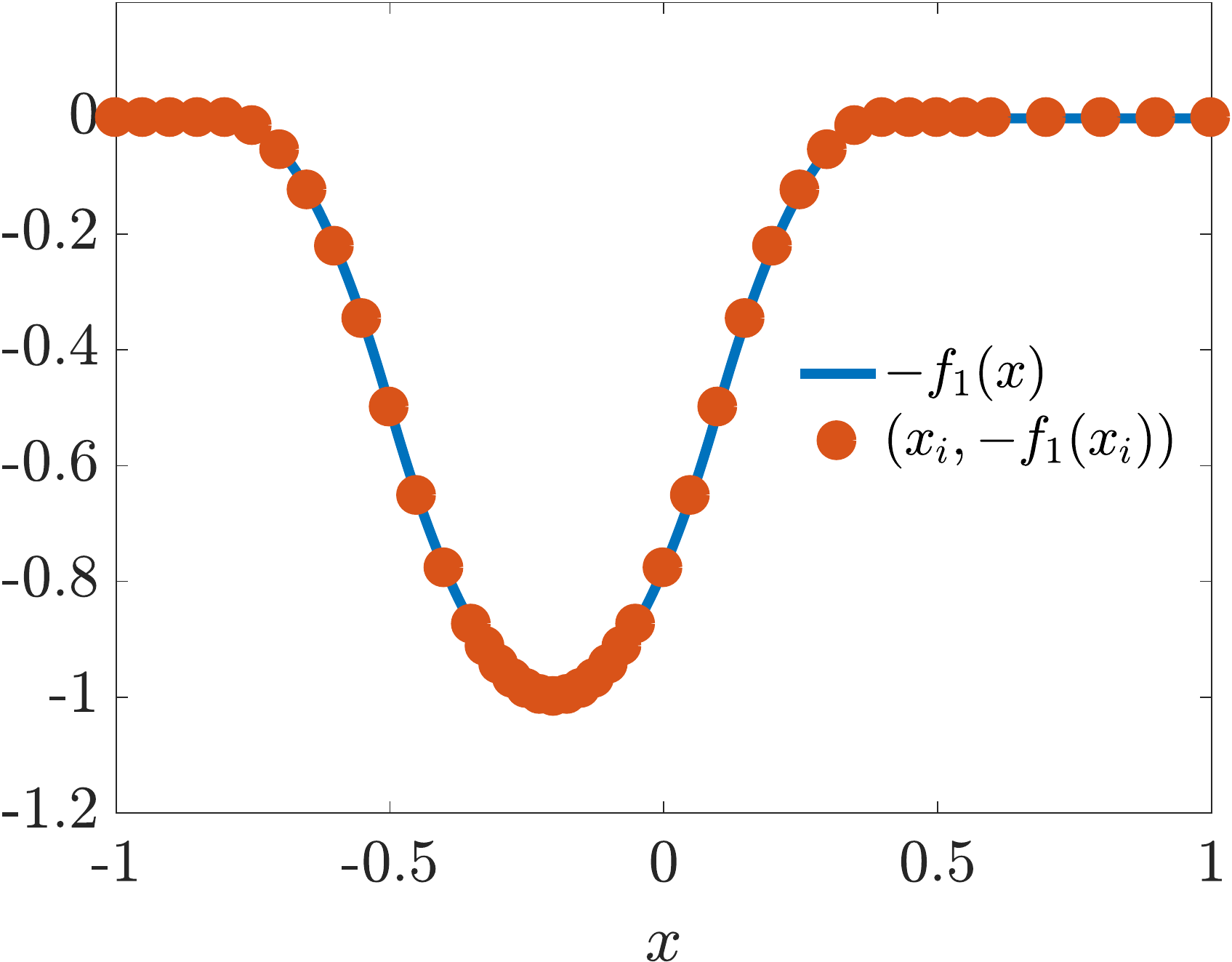}
		\\ (a) & (b)
	\end{tabular}
	\caption{(a) The nonuniform sampling density of Algorithm $A$ for input function
		$-f_1$ defined by $\delta = 0.3$ and $c = -0.2$. A total of~$3$ iterations
		and~$65$ points are used to meet the error tolerance
		of~$0.02$.  We have chosen $n_{\ninit} = 20$ and $\fC_0 = 10$.
		(b) The same situation as in (a), but now
		with Algorithm $M$. Still~$3$ iterations but only~$43$ nonuniform
		sampling points are needed to obtain the minimum of $-f_1$. 	
		\label{f3foolplot}}
\end{figure}

\subsection{The Computational Cost of $A$} \label{subsec:appxcost}

In this section, we investigate the computational cost of our locally adaptive
algorithm. Recall the definitions of $h_l$, $\Ixl$, and $\ell(x)$ from the
previous subsection. Let
$\bar{I}_{x,l}$ be a similar interval with generally five times the
width of~$\Ixl$:
\begin{equation} \label{barIxldef}
\bar{I}_{x,l}=\left[a+\max(0,j-3)h_l, a+ \min(j+2,2^l n_{\ninit})h_l\right] \supset \Ixl,
\end{equation}
with the same $j$ as in \eqref{Ixldef} above.  Let
\begin{equation}\label{eqn:defoflx}
L(x) = \min \left\{ l \in \mathbb{N}_0 :  \frac{1}{8} \fC\left(3h_l\right)h_l^2\norm[\bar{I}_{x,l}]{f''} \le \abstol \right\}.
\end{equation}
Note that $L(x)$ does depend on $f$ and $\abstol$, although this dependence is
suppressed in the notation.

We now show that $\ell(x) \le L(x)$. At each iteration of Algorithm $A$, $x$
lies in $\Ixl$ for some $l$, and by the time Algorithm $A$ terminates, all
values of $l = 0, \ldots, \ell(x)$ are realized. If $\ell(x) > L(x)$, then at
 iteration $L(x)$, the interval  $I_{x,L(x)}$ must be split in Step~\ref{stage2} of $A$. So,
$I_{x,L(x)}$ has width $h_{L(x)}$ and corresponds to $[x_{i-1},x_i]$ for some
$i$. We assume that $i \in 3\!:\!n-2$; the other cases have a similar proof.
According to Step~\ref{stage2} of Algorithm $A$, the only way for
$[x_{i-1},x_i]$ to be split is if $\oerr_{i-2}$, $\oerr_{i-1}$, $\oerr_{i}$, or
$\oerr_{i+1}$ is larger than $\abstol$. However, in the proof of
Theorem~\ref{thm:algAworks} it is noted that for $k \in \{-2, -1, 0, 1\}$,
\begin{align}
\nonumber
\oerr_{i+k} & = \frac{1}4 \fC(3h_{L(x)}) h_{L(x)}^2 \abs{D(f,x_{i-1+k},x_{i+1+k})} \qquad
\text{by \eqref{divdiffbd}} \\
\nonumber
& \le \frac{1} 8  \fC(3h_{L(x)}) h_{L(x)}^2\norm[{[x_{i-1+k},x_{i+1+k}]}]{f''}  \qquad
\text{by \eqref{NDDbdm}}\\
& \le \frac{1} 8  \fC(3h_{L(x)}) h_{L(x)}^2 \norm[\bar{I}_{x,L(x)}]{f''} \le \varepsilon \qquad
\text{by \eqref{barIxldef} and \eqref{eqn:defoflx}}. \label{oerrbd}
\end{align}
This is a contradiction, so in fact, $\ell(x) \le L(x)$, which is used to prove an upper
bound on the
computational cost of Algorithm $A$.

\begin{theorem}\label{thm:cost}
Let $\cost(A,f,\abstol)$ denote the number of functional evaluations required by
$A(f,\abstol)$. This computational cost has the following upper bound:
\begin{equation*}
\cost(A,f,\abstol) \le \frac{1}{h_0}\int_a^b 2^{L(x)} \, \dif x +1
= \int_a^b \frac{1}{h_{L(x)}} \, \dif x +1, \\
\end{equation*}
where $L(x)$ is defined in~\eqref{eqn:defoflx}.
\end{theorem}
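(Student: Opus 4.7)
The plan is to identify the computational cost with the number of nodes in the terminating partition, express that count as an integral of the reciprocal subinterval width, and then invoke the already-established inequality $\ell(x) \le L(x)$.

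Since Algorithm $A$ evaluates $f$ only at nodes of the current partition, and Step~\ref{stage2} only \emph{inserts} midpoints without removing any existing node, no value of $f$ is ever recomputed. Hence $\cost(A, f, \abstol) = n + 1$, where $n$ is the number of subintervals in the final partition. For almost every $x \in [a,b]$, the subinterval of that partition containing $x$ is $\Ixlx$ of width $h_{\ell(x)}$, so
\[
\int_a^b \frac{1}{h_{\ell(x)}} \, \dif x \;=\; \sum_{i=1}^{n} \int_{x_{i-1}}^{x_i} \frac{\dif x}{x_i - x_{i-1}} \;=\; n,
\]
and therefore $\cost(A,f,\abstol) = 1 + \int_a^b 1/h_{\ell(x)} \, \dif x$.

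Finally, the inequality $\ell(x) \le L(x)$, which was derived by contradiction via~\eqref{oerrbd} just before the theorem statement, together with $h_l = 2^{-l}h_0$, gives $h_{\ell(x)} \ge h_{L(x)}$, i.e., $1/h_{\ell(x)} \le 1/h_{L(x)} = 2^{L(x)}/h_0$. Substituting this into the integral identity produces both of the claimed equivalent upper bounds. The argument is essentially bookkeeping; the one point that requires any care is confirming $\cost(A,f,\abstol) = n+1$ (that no value is resampled and that the final partition is well defined), and it is handled by observing that each refinement step of the algorithm monotonically enlarges the set of partition nodes. The genuine technical content --- the bound $\ell(x) \le L(x)$ --- has already been dealt with, so nothing further is needed.
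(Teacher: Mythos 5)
Your proposal is correct and follows essentially the same route as the paper's own proof: both identify the cost with the $n+1$ nodes of the final partition, rewrite $n$ as $\int_a^b \dif x / h_{\ell(x)} = \frac{1}{h_0}\int_a^b 2^{\ell(x)} \, \dif x$ by integrating the constant $1/h_{\ell(x)}$ over each final subinterval, and then apply the previously established inequality $\ell(x) \le L(x)$. The only addition is your explicit remark that no function value is ever resampled, which the paper leaves implicit.
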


\begin{proof}
Let $x_{0:n}$ be the final partition when $A(f,\abstol)$
successfully terminates. Note that $2^{\ell(x)}$ is constant for $x \in
I_{x_{i-1},\ell(x_{i-1})} = [x_{i-1},x_{i}]$ for $i \in \oton$. Furthermore
$\int_{x_{i-1}}^{x_{i}} 2^{\ell(x)} \, \dif x = h_0$. Then the number of
function values required is
\begin{equation*}
n+1 = 1 + \sum_{i=1}^{n} 1 = 1 + \sum_{i=1}^{n} \frac{1}{h_0}
\int_{x_{i-1}}^{x_{i}} 2^{\ell(x)} \, \dif  x = 1 + \frac{1}{h_0}\int_a^b 2^{\ell(x)} \, \dif x.
\end{equation*}
Noting that $\ell(x) \le L(x)$ establishes the formula for $\cost(A,f,\abstol)$.
\end{proof}

From the definition of $L(x)$ in~\eqref{eqn:defoflx}, we know that
\begin{align*}
\frac{1}{h_{L(x)}} = \frac{2}{h_{L(x)-1}}
& < 2 \sqrt{\frac{\fC\left(3 h_{L(x)-1} \right)\norm[\bar{I}_{x,L(x)-1}]{f''} }{8\abstol}}
= \sqrt{\frac{\fC\left(6 h_{L(x)} \right)\norm[\bar{I}_{x,L(x)-1}]{f''} }{2\abstol}}.
\end{align*}
As $\abstol \to 0$, $L(x) \to \infty$, $h_{L(x)} \to 0$, and  $\norm[\bar{I}_{x,L(x)-1}]{f''}$
approaches $|f''(x)|$.  Thus, the small $\abstol$ asymptotic upper bound on
computational cost is
\begin{align*}
\cost(A,f,\abstol)
&\lesssim \int_a^b \sqrt{\frac{\fC\left(0\right)   |f''(x)|}{2\abstol}} \, \dif x +1
=\sqrt{\frac{\fC_0 \norm[\frac 12]{f''}}{2\abstol}} +1  \\
& \le (b-a)\sqrt{\frac{\fC_0 \norm{f''}}{2\abstol}} +1  \qquad
\text{by \eqref{halflessinf} below}.
\end{align*}

For functions in the cone $\cc$, the (quasi-)seminorms $\norm{f''}$ and $\norm[\frac
12]{f''}$ are equivalent, but  for functions in $ \cw^{2,\infty}$ they are not, as shown in the following proposition.

\begin{prop} \label{equivnormprop}
	The quantities $\norm{f''}$ and $\norm[\frac 12]{f''}$ bound each other as follows:
	\begin{subequations}
		\begin{gather}
		\label{halflessinf}
		(b-a)^2\norm[-\infty,{[\alpha,\beta]}]{f''}  \le \norm[\frac 12,{[\alpha,\beta]}]{f''}
		\le (b-a)^2 \norm[{[\alpha,\beta]}]{f''} \quad \forall f \in \cw^{2,\infty}, \\
		\label{inflesshalf}
		\frac{4\fh^2}{27\fC_0} \norm{f''} \le \norm[\frac 12]{f''} \qquad \forall f \in \cc, \\
		\label{infhalfnoteq}
		\sup_{f \in \cw^{2,\infty} :  \norm[\frac 12]{f''} \le 1}  \norm{f''}
		= \infty.
		\end{gather}
	\end{subequations}
\end{prop}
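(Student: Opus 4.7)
The three inequalities split into routine pointwise bounds for \eqref{halflessinf}, the main cone argument for \eqref{inflesshalf}, and a fooling construction for \eqref{infhalfnoteq}.

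For \eqref{halflessinf}, the right-hand side is a direct pointwise estimate: on $[\alpha,\beta]$ one has $|f''(x)|\le\norm[{[\alpha,\beta]}]{f''}$ almost everywhere, so $\int_{\alpha}^{\beta}|f''|^{1/2}\dif x\le(\beta-\alpha)\norm[{[\alpha,\beta]}]{f''}^{1/2}$; squaring and enlarging the prefactor from $(\beta-\alpha)^{2}$ to $(b-a)^{2}$ finishes. For the left-hand side, the core observation is that since $f'$ is absolutely continuous, Lebesgue's differentiation theorem gives $f''(x)=\lim_{\eta,\zeta\to x,\,\eta<\zeta}[f'(\zeta)-f'(\eta)]/[\zeta-\eta]$ for a.e.\ $x\in[\alpha,\beta]$. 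Since every divided difference has absolute value at least $\norm[-\infty,{[\alpha,\beta]}]{f''}$, passing to the limit yields $|f''(x)|\ge\norm[-\infty,{[\alpha,\beta]}]{f''}$ for a.e.\ $x\in[\alpha,\beta]$; integrating $|f''|^{1/2}$ over $[\alpha,\beta]$ and squaring then yields the claim (with natural prefactor $(\beta-\alpha)^2$, which suffices since the proposition is applied with $[\alpha,\beta]=[a,b]$).

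The main step \eqref{inflesshalf} uses the cone condition to spread the peak of $|f''|$ over an interval of length comparable to $\fh$. Given $\epsilon>0$, select $x^{*}\in[a,b]$ such that every neighborhood of $x^{*}$ has essential supremum of $|f''|$ strictly greater than $\norm{f''}-\epsilon$. For any $h\in(0,\fh)$, take a shrinking interval $[\alpha,\beta]$ containing $x^{*}$ with $\beta-\alpha<h$, and choose whichever of $[\beta-h,\alpha]$ or $[\beta,\alpha+h]$ lies in $[a,b]$ (always possible, because $\fh<b-a$ whenever $n_{\ninit}\ge 5$). The cone definition \eqref{conedef} together with the a.e.\ lower bound from the previous paragraph then gives $|f''(x)|\ge(\norm{f''}-\epsilon)/\fC(h)$ a.e.\ on that side interval, whose length approaches $h$ as $\beta-\alpha\to 0$. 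Integrating and squaring produces
\[
\norm[\tfrac12]{f''}\ \ge\ \frac{h^{2}(\norm{f''}-\epsilon)}{\fC(h)}
\ =\ \frac{h^{2}(\fh-h)(\norm{f''}-\epsilon)}{\fC_{0}\,\fh}.
\]
Elementary calculus maximizes $h^{2}(\fh-h)$ at $h=2\fh/3$ with value $4\fh^{3}/27$, so $\norm[\frac12]{f''}\ge 4\fh^{2}(\norm{f''}-\epsilon)/(27\fC_{0})$, and letting $\epsilon\downarrow 0$ finishes. I expect the main obstacle will be the boundary bookkeeping, namely confirming that at least one side interval always fits in $[a,b]$ so that the one-sided case of \eqref{tbdef} applies, and carefully handling the essential supremum nature of $\norm{f''}$ when choosing $x^{*}$.

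Finally, \eqref{infhalfnoteq} is exhibited by the hump family $f_{1}$ of \eqref{f3def}. Direct computation on the three subintervals where $f_{1}''\ne 0$ gives $\norm{f_{1}''}=1/\delta^{2}$, while $\int|f_{1}''|^{1/2}\dif x=4\delta\cdot(1/\delta)=4$, so $\norm[\frac12]{f_{1}''}=16$ independently of $\delta$. Rescaling by $1/16$ and sending $\delta\downarrow 0$ produces a sequence in $\cw^{2,\infty}$ with $\norm[\frac12]{f''}=1$ but $\norm{f''}=1/(16\delta^{2})\to\infty$, whence the supremum in \eqref{infhalfnoteq} is infinite; this confirms that the equivalence of the two seminorms in \eqref{inflesshalf} genuinely requires the cone restriction and fails in general $\cw^{2,\infty}$.
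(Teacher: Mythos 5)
Your proof is correct and follows essentially the same route as the paper's: \eqref{halflessinf} by integrating the pointwise bounds $\norm[-\infty,{[\alpha,\beta]}]{f''}\le\abs{f''(x)}\le\norm[{[\alpha,\beta]}]{f''}$, \eqref{inflesshalf} by combining the cone bound on a shrinking interval with the side-interval estimate and maximizing $h^{2}(\fh-h)$ at $h=2\fh/3$, and \eqref{infhalfnoteq} via the hump family $f_1$ with $\norm{f_1''}/\norm[\frac12]{f_1''}=\delta^{-2}/16$. The only point to tighten is in \eqref{inflesshalf}: when both side intervals fit inside $[a,b]$, the definition \eqref{tbdef} bounds $\norm[{[\alpha,\beta]}]{f''}$ by the \emph{maximum} of the two one-sided quantities, so you must carry out the integration on the side attaining that maximum rather than on an arbitrary side contained in $[a,b]$; with that one-word fix your argument coincides with the paper's, which routes the same estimate through \eqref{onebdm} instead of an explicit a.e.\ pointwise bound.
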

\begin{proof}
The first inequality follows from the definitions of the (quasi-)norms:
\begin{multline} \label{onebdm}
 (\beta - \alpha)^2 \norm[-\infty,{[\alpha,\beta]}]{f''}
 = \biggl\{\sqrt{\norm[-\infty,{[\alpha,\beta]}]{f''}}  \int_{\alpha}^{\beta} \, \dif x \biggr\}^2
 \le \biggl\{ \int_{\alpha}^{\beta} \sqrt{\abs{f''(x)}} \, \dif x \biggr\}^2\\
 = \norm[\frac 12,{[\alpha,\beta]}]{f''}
 \le\biggl\{\sqrt{\norm[{[\alpha,\beta]}]{f''}}  \int_{\alpha}^{\beta} \, \dif x \biggr\}^2
 \le  (\beta - \alpha)^2 \norm[{[\alpha,\beta]}]{f''}.
\end{multline}

The second inequality comes from the cone definition.  Since
$\norm{f''}  = \norm[{[\alpha,\beta]}]{f''}$  for some interval $[\alpha,\beta]$ whose
width can be made arbitrarily small, we have
\begin{align*}
\norm[{[\alpha,\beta]}]{f''}
& \le \inf\bigl\{B(f,\alpha,\beta,h,h) : h \in (\beta - \alpha,\fh) \bigr \} \quad \text{by}~\eqref{conedef} \\
& \le  \inf\bigl\{\fC(h) \max\bigl(\norm[-\infty,{[\beta - h, \alpha]}]{f''},
\norm[-\infty,{[\beta, \alpha+h]}]{f''}\bigr) : h \in (\beta - \alpha,\fh) \bigr \} \\
& \le \inf_{\beta - \alpha < h < \fh} \frac{\fC(h)}{(h-\beta+\alpha)^2}\norm[\frac 12]{f''} \\
& \qquad \qquad\text{by}~\eqref{onebdm}, \text{ and since }
\norm[{\frac12,[\alpha,\beta]}]{f''} \le \norm[\frac 12]{f''} \ \forall [\alpha,\beta]
\subseteq [a,b] \\
& \le \inf_{0 < h < \fh} \frac{\fC(h)}{h^2}\norm[\frac 12]{f''} \qquad
\text{since $\beta-\alpha$ may be made arbitrarily small}\\
& = \frac{27\fC_0}{4\fh^2}\norm[\frac 12]{f''} \qquad \text{by \eqref{hCdef}}.
\end{align*}
When $\fh$ is small, it is possible for $\norm[\frac 12]{f''} $ to be quite
small in comparison to $\norm{f''}$. This occurs when $f''$ is rather spiky.

The hump function $f_1$ in~\eqref{f3def}  satisfies
$
 {\norm{f_1''}}/{\norm[\frac 12]{f_1''}}  =  { \delta^{-2}}/{16}.
$
By making~$\delta$ small enough, we may make this ratio arbitrarily large, thus proving
\eqref{infhalfnoteq}. However, since $f_1
\notin \cc$ for $\delta < 2\fh$, this does not violate \eqref{inflesshalf}.
\end{proof}

\subsection{Lower Complexity Bound} \label{subsec:appxcomp}

The upper bound on the computational cost of Algorithm~$A$ provides an upper
bound on the complexity of problem~\eqref{appxprob}. We now construct lower
bounds on the complexity of the problem, i.e., the computational cost of the
best algorithm. We then observe that these lower bounds have the same asymptotic
behavior as the computational cost of Algorithm $A$.  \begin{FJHchange}
Our lower complexity
bounds
are
derived for subsets of functions in the balls, $\cb^{2,p}_{\sigma} = \{f \in W^{1,\infty} :
\norm[p]{f''} \le \sigma\}$, for $p = 1/2, \infty$.

\begin{theorem} 	\label{thm:A_complexity}
	Let $\sigma$ be any positive number, and $\cc$ be defined as in \eqref{conedef}.
	
	\begin{enumerate}
		\renewcommand{\labelenumi}{\roman{enumi}.}
	
	\item If $A^*$ solves~\eqref{appxprob} for all $f \in \cb^{2,\frac
		12}_{\sigma}$ and all $0 < \abstol < \sigma/16$, then
		\begin{subequations} \label{lowbdW}
		\begin{equation}
		 \cost(A^*,f,\abstol)
		= \infty.  \label{lowbdWhalf}
		\end{equation}
		\item If $A^*$ solves~\eqref{appxprob} for all $f \in \cb^{2,\infty}_{\sigma}$ and all
		$\abstol > 0$, then	
		\begin{equation}
	    \cost(A^*,f,\abstol)
		\ge \frac{(b-a)}{4} \sqrt{\frac{\sigma}{\abstol}} - 1.  \label{lowbdWinf}
		\end{equation}
		\end{subequations}

		\item If $A^*$ satisfies~\eqref{appxprob} for all $f \in \cc \cap \cb^{2,\frac
	12}_{\sigma}$ and all $\abstol > 0$, then
\begin{subequations} \label{lowbdC}
	\begin{equation}
	\cost(A^*,f,\abstol) \ge
	\sqrt{\frac{(\fC_0-1)\sigma}{16(\fC_0+1)\abstol}} -1. \label{lowbdChalf}
	\end{equation}
	\item If $A^*$ satisfies~\eqref{appxprob} for all $f \in \cc \cap \cb^{2,\infty}_{\sigma}$
	and all $\abstol > 0$, then 	
	\begin{equation}
	\cost(A^*,f,\abstol) \ge
	(b-a)\sqrt{\frac{(\fC_0-1)\sigma}{16(\fC_0+1)\abstol}} -1 \label{lowbdCinf}.
	\end{equation}
\end{subequations}

\end{enumerate}			

\end{theorem}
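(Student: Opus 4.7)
The plan is to use a standard information-based complexity adversary argument. For any deterministic algorithm $A^*$ that uses at most $N$ adaptive function evaluations, let $x_1 < \cdots < x_N$ denote the sample locations chosen by $A^*$ when run on the zero function. For any $g$ in the appropriate class with $g(x_i) = 0$ for $i = 1,\ldots,N$, determinism gives $A^*(\pm g) = A^*(0)$ and the triangle inequality yields
\[
2\norm{g} \;=\; \norm{g - (-g)} \;\le\; \norm{g - A^*(0)} + \norm{(-g) - A^*(0)},
\]
so whenever $\norm{g} > \abstol$ the algorithm fails on at least one of $\pm g$, contradicting~\eqref{appxprob}. The proof therefore reduces to constructing, for any configuration of $N$ samples, a ``fooling function'' $g$ in the specified class that vanishes at all samples and has $\norm{g}$ as large as the constraints permit.

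For parts (i) and (ii) we take $g = \gamma f_1$, a shifted, rescaled copy of the hump in~\eqref{f3def} with parameters $(c,\delta)$ and a scalar $\gamma$. By pigeonhole some gap between consecutive samples (possibly abutting an endpoint) has width at least $(b-a)/(N+1)$; taking $4\delta \le (b-a)/(N+1)$ and placing $c$ at the midpoint of that gap makes $g$ vanish at every sample. For~(i), we scale so that $\norm[\frac12]{g''} = \sigma$: since $\norm[\frac12]{f_1''} = 16$ independently of $\delta$ (as used in the proof of Proposition~\ref{equivnormprop}), this gives $\gamma = \sigma/16$ and $\norm{g} = \sigma/16$, which exceeds $\abstol$ whenever $\abstol < \sigma/16$; because $\delta$ can be shrunk to fit any finite $N$, the cost must be infinite. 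For~(ii), we scale so that $\norm{g''} = \sigma$: using $\norm{f_1''} = \delta^{-2}$ gives $\gamma = \sigma\delta^2$ and $\norm{g} = \sigma\delta^2$. Choosing $\delta = (b-a)/(4(N+1))$ and requiring $\norm{g} > \abstol$, the contrapositive yields $N \ge (b-a)\sqrt{\sigma/\abstol}/4 - 1$.

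Parts (iii) and (iv) are more delicate because the fooling function must also lie in~$\cc$. Since $f_1 \in \cc$ requires $\delta \ge 2\fh$, the scaled hump alone cannot be used with arbitrarily small support. The plan is to replace it by a compactly supported, $C^1$ piecewise-quadratic $g$ whose second derivative equals a constant $+\sigma$ on two narrow shoulder intervals flanking a broader core on which $g''$ is constant with opposite sign and magnitude close to $\sigma/\fC_0$; the core and shoulder widths are chosen so that both $g$ and $g'$ match zero at the endpoints, making $g$ vanish outside an interval of width $4\delta$. To verify $g \in \cc$ we exhibit, for each $[\alpha,\beta] \subset [a,b]$, an admissible $h_\pm$ placing $[\beta-h_-,\alpha]$ or $[\beta,\alpha+h_+]$ entirely on one side of the nearest piecewise transition of $g''$, so that $\norm[-\infty]{g''}$ there equals the local constant value of $|g''|$; the cone inequality is then secured by $\fC(h_\pm) \ge \fC_0$ together with the shoulder-to-core amplitude ratio $\fC_0$. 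Optimizing the core-to-shoulder width ratio produces a peak of order $\sigma\delta^2(\fC_0-1)/(\fC_0+1)$ subject to $\norm{g''}\le\sigma$; combining with the gap constraint $4\delta \le (b-a)/(N+1)$ and the fooling requirement yields~\eqref{lowbdCinf}. Part~(iii) follows by the same geometry with $\norm{g''}\le\sigma$ replaced by $\norm[\frac12]{g''}\le\sigma$, the disappearance of the $(b-a)$ factor reflecting that the $L^{1/2}$-quasinorm of $g''$ for the constructed $g$ scales like the support width squared times $\sigma$.

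The main obstacle is the construction and cone verification for parts (iii) and (iv). Choosing admissible $h_\pm$ in every case of~\eqref{tbdef} requires care for subintervals straddling the shoulder-core or shoulder-exterior transitions, near which $g'$ has local extrema that send $\norm[-\infty]{g''}$ on a generic surrounding interval to zero. Extracting the exact prefactor $\sqrt{(\fC_0-1)/(\fC_0+1)}$ additionally requires balancing the core-to-shoulder width ratio so as to simultaneously saturate the global norm bound and the cone inequality $\norm{g''}_{[\alpha,\beta]} \le \fC(h_\pm)\norm[-\infty]{g''}$ at the worst-case subinterval.
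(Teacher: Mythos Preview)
Your argument for parts (i) and (ii) is correct and matches the paper's approach.

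For parts (iii) and (iv), however, the plan has a fatal flaw. First, a quantifier issue: the cone condition in~\eqref{conedef} requires $\norm[{[\alpha,\beta]}]{f''} \le B(f'',\alpha,\beta,h_-,h_+)$ for \emph{all} admissible $h_\pm$, not for some $h_\pm$ that you exhibit. More importantly, no nonzero compactly supported function $g$ with support width below~$\fh$ can lie in $\cc$. Take $[\alpha,\beta]$ a tiny interval where $\abs{g''}$ is positive, and choose $h_- = h_+$ larger than the support width but still below~$\fh$; then both $[\beta-h_-,\alpha]$ and $[\beta,\alpha+h_+]$ contain points outside the support, where $g''=0$, so $\norm[-\infty]{g''}$ vanishes on each and $B=0 < \norm[{[\alpha,\beta]}]{g''}$. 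This is exactly the phenomenon the paper warns about after~\eqref{conedef}: if $f''$ vanishes on a set wider than $\fh$ (or at two points within~$\fh$ of each other), $f\notin\cc$. Since you need the support width $4\delta \le (b-a)/(N+1)$ to shrink with $N$, your fooling function is forced out of $\cc$ for all large $N$, and the argument collapses.

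The paper sidesteps this by abandoning the zero function as the probe input. It runs $A^*$ on $\gamma f_0$ with $f_0(x)=x^2/2$, whose second derivative is identically~$1$, and then perturbs to $\gamma f_\pm = \gamma(f_0 \pm \tgamma f_1)$ with $\tgamma = \tfrac{\fC_0-1}{\fC_0+1}\delta^2$. Because the perturbation satisfies $\abs{\tgamma f_1''}\le \tfrac{\fC_0-1}{\fC_0+1}<1$, the second derivative of $f_\pm$ stays between $\tfrac{2}{\fC_0+1}$ and $\tfrac{2\fC_0}{\fC_0+1}$ everywhere, so $\norm[-\infty,{[\alpha,\beta]}]{f_\pm''} \ge \norm{f_\pm''}/\fC_0$ for every subinterval and $\gamma f_\pm \in \cc$ automatically. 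The adversary inequality then compares $\gamma f_+$ against $\gamma f_-$ (both indistinguishable from $\gamma f_0$ at the sample sites), yielding $\abstol \ge \gamma\tgamma$, from which the constants in~\eqref{lowbdC} follow directly. The key idea you are missing is that the compactly supported bump should be a \emph{perturbation} of a function whose second derivative is bounded away from zero, not the fooling function itself.
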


Note by comparing \eqref{lowbdWhalf} and \eqref{lowbdChalf} that the lower complexity
bound is significantly altered by restricting the
set of input functions from the whole ball of $ \cb^{2,\frac 12}_{\sigma}$ to the
intersection of that ball with the cone $\cc$. Also note that the lower bounds above
assume that the radius of the ball, $\sigma$, is known a priori, whereas for our Algorithm
$A$, \emph{no} bound on a norm of $f''$ is provided as  input. However, the
computational cost of Algorithm $A$ is asymptotically
the same as the computational cost of the best possible algorithm, $A^*$ in
\eqref{lowbdC}, as $\abstol \to 0$.
\end{FJHchange}

\begin{proof}
	The lower bounds are proved by constructing fooling functions for which
	Algorithm $A$ succeeds, and then showing that at least a certain number of
	samples must be used. The proofs of~\eqref{lowbdW} are simpler, so
	we start with them.
		
	Let $A^*$ be a successful algorithm for all $f \in \cw^{2,\infty}$, and consider
	the partition $x_{0:n+1}$, where $x_{1:n}$ are the data sites
	used to compute $A^*(0,\abstol)$.  We now allow the possibility of $a = x_0=x_1$
	and $x_n = x_{n+1} = b$. Choose any $j=1, \ldots, n+1$ with
	$x_j-x_{j-1} \ge (b-a)/(n+1)$. Let $f_1$ be defined as in~\eqref{f3def} with $c
	= (x_j+x_{j-1})/2$, and $\delta = (b-a)/[4(n+1)]$.
	
	For any real $\gamma$, it follows that $\gamma f_1(x_i)=0$ for $i=0, \ldots,
	n+1$. Figure \ref{f3fig}(b) illustrates this situation. Since $0$ and $\pm
	\gamma f_1$ share the same values at the data sites, then they must share the
	same approximation: $A^*(\pm \gamma f_1,\abstol) = A^*(0,\abstol)$. Moreover,
	$\cost(A^*,0,\abstol) = \cost(A^*,\pm \gamma f_1,\abstol) = n$. Since the
	approximations of~$0, -\gamma f_1$, and $\gamma f_1$ are identical, this implies that
	$\gamma$
	must be no greater than $\abstol$:
	\begin{align*}
	\abstol  &\ge \max(\norm{\gamma f_1 - A^*(\gamma f_1,\abstol)},
	\norm{-\gamma f_1 - A^*(-\gamma f_1,\abstol)}) \\
	&= \max(\norm{\gamma f_1 - A^*(0,\abstol)}, \norm{-\gamma f_1 - A^*(0,\abstol)}) \\
	& \ge \frac{1}{2} [\norm{\gamma f_1 - A^*(0,\abstol)}
	+ \norm{-\gamma f_1 - A^*(0,\abstol)}] \\
	& \ge \frac{1}{2} \norm{\gamma f_1 - (-\gamma f_1)} =  \norm{\gamma f_1}
	= \gamma 
	= \begin{cases} \displaystyle { \norm[\frac 12]{\gamma f_1''}}/{16} , \\
	\displaystyle \delta^2 	\norm{\gamma f_1''}
	=  \frac{(b-a)^2 \norm{\gamma f_1''}}{16(n+1)^2},
	\end{cases}
	\end{align*}
	since $\norm{f_1''} =  \delta^{-2}$,  and $\norm[\frac 12]{f_1''} = 16$. The top inequality
	cannot be satisfied unless $\sigma = \norm[\frac 12]{\gamma
	f_1''}$ is small enough, which establishes~\eqref{lowbdWhalf}. Solving the
	bottom inequality for $n$ in terms of $\sigma = \norm{\gamma f_1''}$
	establishes~\eqref{lowbdWinf}.

	Now, we prove the lower complexity bounds~\eqref{lowbdC}, assuming that $A^*$ is
	a successful algorithm for all $f \in \cc$. Let $f_0$ be defined as follows
	\begin{equation*}
	f_0(x) =\frac{x^2}{2}, \quad f_0''(x) = 1, \quad x \in [a,b]; \qquad \norm[\frac12]{f_0''}
	= (b -a)^2,  \quad \norm{f_0''} = 1.
	\end{equation*}
	Since $f_0''$ is constant, it follows that $f_0 \in \cc$,  and $A^*$ successfully
	approximates $\gamma f_0$ for any  $\gamma\ge0$.
	
	Consider the partition $x_{0:n+1}$, where $x_{1:n}$ are the data sites
	used to compute $A^*(\gamma f_0,\abstol)$, and we again allow the possibility of $a
	= x_0=x_1$
	and $x_n = x_{n+1} = b$.  Again choose any $j=1, \ldots, n+1$ with $x_j-x_{j-1}
	\ge (b-a)/(n+1)$, and let~$f_1$ be defined as in~\eqref{f3def} with $c =
	(x_j+x_{j-1})/2$, and $\delta = (b-a)/[4(n+1)]$. We construct two fooling
	functions:
	\begin{gather*}
	f_{\pm} = f_0 \pm \tgamma f_1, \qquad \tgamma =\frac{\fC_0-1}{\fC_0+1}\delta^2,
	\qquad
	\norm{f''_{\pm}} = 1+\frac{\tgamma}{\delta^2} = \frac{2\fC_0}{\fC_0+1} , \\
\norm[-\infty,{[\alpha,\beta]}]{f''_{\pm}} \ge 1-\frac{\tgamma}{\delta^2} = \frac{2}{\fC_0+1}
= \frac{\norm{f''_{\pm}} }{\fC_0} \qquad \forall [\alpha,\beta] \subseteq [a,b].
	\end{gather*}
	The above calculations show that $\gamma f_{\pm} \in \cc$ for all real
	$\gamma$. Moreover, the definition of $f_{\pm}$ ensures that $A^*(\gamma f_0) =
	A^*(\gamma f_{\pm})$, and $\cost(A^*,\gamma f_0) = \cost(A^*,\gamma f_{\pm}) =
	n$.
	
Analogously to the argument above, we show that $\gamma\tgamma$ must be no larger
than~$\abstol$:
	\begin{align*}
	\abstol & \ge \max(\norm{\gamma f_+-A(\gamma f_+,\abstol)},
	\norm{\gamma f_--A(\gamma f_-, \abstol)}) \\
	& \ge \frac{1}{2} \left[ \norm{\gamma f_+-A(\gamma f_+,\abstol)}
	+ \norm{\gamma f_--A(\gamma f_-, \abstol)} \right] \\
	& = \frac{1}{2} \left[ \norm{\gamma f_+-A(\gamma f_0,\abstol)}
	+ \norm{\gamma f_--A(\gamma f_0, \abstol)} \right] \\
	& \ge \frac{1}{2}  \norm{\gamma f_+ - \gamma f_-} =  \norm{\gamma \tgamma f_1}
	= \gamma \tgamma\\
	& = \begin{Bmatrix} \displaystyle
	\frac{\norm[\frac 12]{\gamma f_0''}}{(b-a)^2}, \\
	\norm{\gamma f_0''}
	\end{Bmatrix}  \cdot \frac{\fC_0-1}{\fC_0+1} \delta^2
	 = \begin{Bmatrix} \displaystyle
	\norm[\frac 12]{\gamma f_0''}, \\
	(b-a)^2\norm{\gamma f_0''}
	\end{Bmatrix}  \cdot \frac{\fC_0-1}{16(\fC_0+1)(n+1)^2}
	\end{align*}
	Substituting $\norm[\frac12]{\gamma f_0''} = \sigma$ in the top inequality and
	$\norm{\gamma f_0''} = \sigma$ in the bottom inequality, and then solving for
	$n$ yield the two bounds in~\eqref{lowbdC}.
\end{proof}

\section{The Minimization Algorithm, $M$} \label{sec:funmin}

\subsection{Algorithm $M$}  \label{sec:minalgo}

Our minimization algorithm $M$ relies on the derivations in the previous
sections. The main departure from Algorithm $A$ is the stopping criterion. It is
unnecessary to approximate $f$ accurately everywhere,  only where $f$ is small.

\begin{algoM} \label{AlgoM}
	For some finite interval $[a,b]$, integer $n_{\ninit} \ge 5$, and constant $\fC_0 \ge 1$, let
	$\fh$ and $\fC(h)$ be defined as in~\eqref{hCdef}.  Let $f:[a,b] \to \reals$ and
	$\abstol >0$ be user inputs. Let
	$n=n_{\ninit}$, and define the initial partition of equally spaced points, $x_{0:n}$, and
	certain index sets of subintervals:
\[
x_i=a+ i\frac{b-a}{n}, \ i \in \zton, \qquad \ci_{+} =  2\!:\!(n-1), \quad \ci_{-} =  1\!:\!(n-2).
\]
	Compute $\hM= \minfi$.   For $ s \in \{+,-\}$ do the following.
	
	\begin{enumerate}[\em Step 1.]
		
		\item \label{stagemin1} \emph{Check for convergence.}
		Compute $\oerr_i $ for all $i \in \ci_{\pm}$ according to \eqref{oerrdef}. Let
		$\widetilde{\mathcal{I}}_{s} = \left\{i \in \mathcal{I}_{s}: \oerr_i
		> \abstol \right\}$.  Next compute
		\begin{gather*}
		\herr_{i,s} := \oerr_i + \hM - \min\bigl(f(x_{i - s2}),f(x_{i-s1})\bigr) \quad
		\forall i \in \widetilde{\mathcal{I}}_{s}, 	\\
		\widehat{\ci}_s = \Bigl\{i \in \widetilde{\ci}_s:  \herr_{i,s} > \abstol \text{ or }
		\bigl( i-s3 \in \widetilde{\ci}_{-s} \text{ \& } \herr_{i-s3,-s} > \abstol \bigr) \Bigr\}.	
		\end{gather*}
		If $\widehat{\mathcal{I}}_+ \cup \widehat{\ci}_- =
		\emptyset$, return $M(f,\abstol) = \widehat{M}$ and terminate the algorithm.
		Otherwise, continue to the next step.
		
		\item \label{stagemin2} \emph{Split the subintervals as needed.}
		Update the present partition, $x_{0:n}$, to include the subinterval midpoints
		\begin{equation*}
		\frac{x_{i-s2} +x_{i-s1}}{2}, \ \frac{x_{i-s1} +x_{i}}{2} \quad \forall i \in \widehat{\ci}_s.
		\end{equation*}
    	(The point $(x_{i-2}+x_{i-1})/2$ is only included for $i \ge 2$, and the point
		$(x_{i+1} +x_{i+2})/2$ is only included for $i \le n-2$.)  Update the sets
		 $\ci_{\pm}$ to consist of the new indices corresponding to the old points
		\[
		x_{i-s1}, \ \frac{x_{i-s1} +x_{i}}{2} \text{ for } i \in \widehat{\ci}_s.
		\]
		(The point $x_{i-1}$ is only included for $i \ge 2$, and the point $x_{i+1}$ is only
		included for $i \le n-2$.) Return to Step~\ref{stagemin1}.
	\end{enumerate}
\end{algoM}

\begin{theorem} \label{thm:algMworks}
Algorithm $M$ defined above satisfies~\eqref{optprob} for functions in the
cone $\cc$ defined in~\eqref{conedef}.
\end{theorem}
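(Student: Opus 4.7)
The plan is to handle the two directions of \eqref{optprob} separately. The lower inequality $M(f,\abstol) \ge \min_{[a,b]} f$ is immediate: the algorithm returns $M(f,\abstol) = \hM$, which is by construction $\min_{i \in \zton} f(x_i)$, hence a value attained by $f$ on $[a,b]$. So all the work lies in proving $\hM - \min_{[a,b]} f \le \abstol$, i.e.\ that $f(x) \ge \hM - \abstol$ for every $x \in [a,b]$.

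First, I would fix $x \in [a,b]$ and let $[x_{j-1},x_j]$ be the subinterval of the final partition that contains $x$. Since $S(f,\datasites)$ interpolates $f$ at the endpoints and is linear between them, $S(f,\datasites)(x) \ge \min(f(x_{j-1}),f(x_j)) \ge \hM$, and therefore
\[
f(x) \ \ge \ S(f,\datasites)(x) - \norm[{[x_{j-1},x_j]}]{f - S(f,\datasites)} \ \ge \ \hM - \norm[{[x_{j-1},x_j]}]{f - S(f,\datasites)}.
\]
Combining this with \eqref{appxerrbda} reduces the goal to bounding $\tfrac{(x_j - x_{j-1})^2}{8}\norm[{[x_{j-1},x_j]}]{f''}$ in a way that uses the stopping criterion.

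Next, I would essentially reuse the machinery from the proof of Theorem~\ref{thm:algAworks}. Let $l$ be the final iteration and $h_l = x_j - x_{j-1}$. Just as in \eqref{datalesstol}, the splitting rule in Step~\ref{stagemin2} guarantees that there exist neighboring indices $i_\pm$ (two to the left and two to the right of $[x_{j-1},x_j]$) of the final partition for which $\oerr_{i_\pm}$ was evaluated and the corresponding subintervals of width $h_l$ lie within distance $3h_l$ of $[x_{j-1},x_j]$. The cone condition \eqref{conedef} applied with the intervals $[x_{j-1}-3h_l, x_{j-1}]$ and $[x_j, x_j + 3h_l]$, together with \eqref{NDDbdm} applied to the divided differences used to form $\oerr_{i_\pm}$, yields
\[
\tfrac{h_l^2}{8}\norm[{[x_{j-1},x_j]}]{f''} \ \le \ \max\bigl(\oerr_{i_-},\,\oerr_{i_+}\bigr).
\]
Endpoints ($x$ near $a$ or $b$) are handled by the one-sided branches of $B$ in \eqref{tbdef}, exactly as before.

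Finally, I would invoke the stopping criterion to close the argument. Termination means $\widehat{\ci}_+ \cup \widehat{\ci}_- = \emptyset$, so for each of $i_\pm$ one of two mutually exclusive alternatives holds: either $i_\pm \notin \widetilde{\ci}_{\pm s}$, in which case $\oerr_{i_\pm} \le \abstol$ directly; or $i_\pm \in \widetilde{\ci}_{\pm s}$ but $\herr_{i_\pm,\pm s} \le \abstol$, in which case the definition of $\herr$ yields $\min(f(x_{i_\pm - (\pm s)2}), f(x_{i_\pm - (\pm s)1})) \ge \hM + \oerr_{i_\pm} - \abstol$. In the first alternative the bound of the previous paragraph gives $\norm[{[x_{j-1},x_j]}]{f - S} \le \abstol$, which completes the argument. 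In the second alternative the point $x$ belongs to an interval whose endpoints themselves exceed $\hM - \abstol + \oerr_{i_\pm}$ by construction, and this excess absorbs the spline error, again yielding $f(x) \ge \hM - \abstol$. The main obstacle is the careful bookkeeping: identifying which $\oerr_{i_\pm}$ and $\herr_{i_\pm,\pm s}$ govern each final subinterval, verifying that the asymmetric splitting of only two subintervals per element of $\widehat{\ci}_s$ (rather than four, as in Algorithm~$A$) still supplies the neighbor data needed for the cone bound, and treating the boundary cases where fewer neighbors are available.
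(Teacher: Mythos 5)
Your plan is correct and follows essentially the same route as the paper's proof: reduce \eqref{optprob} to showing $\hM - \min_{[x_{j-1},x_j]}f \le \abstol$ on each final subinterval via the spline error bound \eqref{appxerrbda}, bound $\tfrac{h^2}{8}\norm[{[x_{j-1},x_j]}]{f''}$ by the neighboring $\oerr$'s through \eqref{NDDbdm} and the cone condition exactly as in Theorem~\ref{thm:algAworks}, and then close with the two-case stopping criterion ($\oerr\le\abstol$ versus $\herr\le\abstol$). When writing it out, make explicit two details the paper also relies on: the case analysis must be applied to whichever side attains the maximum in the definition of $B$ in \eqref{tbdef}, and since a subinterval's certificate may have been issued at an earlier iteration $l_*$, you need the monotonicity $\hM_{l}\le\hM_{l_*}$ for $l \ge l_*$ to carry the bound to termination.
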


\begin{proof}
	The proof of success of Algorithm $M$ is similar to that for Algorithm~$A$. Here
	we give the highlights.     We use the notation of $I_{x,l}$ introduced in \eqref{Ixldef}
	and analogously define $\tell(x)$ such that $I_{x,\tell(x)}$ is the final subinterval in
	Algorithm $M$ containing $x$ when the algorithm terminates.   For a fixed $x \in
	[a,b]$ we argue as in the proof of Theorem \ref{thm:algAworks} that there exist
	$l_{\pm} \le l_{*} \le \ell(x)$ such that $I_{x,l_*} =[x_{i_{l_*}-1},x_{i_{l_*}}]$,
	$x_{i_{l_{\pm}-1}} \le x_{i_{l_*}-1} \le x_{i_{l_*}} \le x_{i_{l_{\pm}}}$, and
	\begin{gather*}
	\frac{1} 8 \fC(3h_{l_-}) h_{l_-}^2 \norm[-\infty,{[x_{i_{l_-}-3},x_{i_{l_-}-1}]}]{f} + \hM_{l_*}
	-  \min\bigl(f(x_{i_{l_*}-1}),f(x_{i_{l_*}})\bigr)   \le \abstol , \\
	\frac{1} 8 \fC(3h_{l_+}) h_{l_+}^2 \norm[-\infty,{[x_{i_{l_+}},x_{i_{l_+}+2}]}]{f} +  \hM_{l_*}
	-  \min\bigl(f(x_{i_{l_*}-1}),f(x_{i_{l_*}})\bigr)   \le \abstol,
	\end{gather*}
	where $ \hM_{l}$ denotes the value of $\hM$ at iteration $l \in \natzero$.  By the
	definition of $\cc$ in~\eqref{conedef}, this then implies that
	\begin{align}
	\nonumber
		\MoveEqLeft[5]{\hM_{l_*} - \min_{x_{i_{l_*}-1} \le x \le x_{i_{l_*}}} f(x) } \nonumber
	\\ \le \ & \hM_{l_*}  - \minfii + \frac{1}{8} h_{l_*}^2\norm[{[x_{i_{l_*}-1}, x_{i_{l_*}}]}]{f} \le
	\abstol. \label{herrtogether}
	\end{align}
Further iterations of the algorithm can only make $\hM_{l}$ possibly closer to the
solution, $\min_{a \le x \le b} f(x) $.
\end{proof}

Figure \ref{f3foolplot}(b) displays the same function $-f_1$ as in
Figure \ref{f3foolplot}(a), but this time with the sampling points used for
minimization. Here $M(-f_1,0.02)$ uses only~$43$ points, whereas
$A(-f_1,0.02)$ uses $65$ points. This is because $-f_1$ does not need to be
approximated accurately when its value is far from the minimum.

\subsection{The Computational Cost of $M$} \label{subsec:optcost}
The derivation of an upper bound on the cost of Algorithm $M$ proceeds in a
similar manner as that for Algorithm $A$. There are essentially two reasons that a
subinterval $[x_{i-1},x_i]$ need not be split further. The first reason is the same as that
for Algorithm $A$: the function being minimized is approximated on
$[x_{i-1},x_i]$ with an error no more than the tolerance $\abstol$. This is
reflected in the definition of $\widetilde{\ci}_{\pm}$ in Step \ref{stagemin1} of Algorithm
$M$.  The second
reason is that, although the spline approximation error on $[x_{i-1},x_i]$ is larger
than $\abstol$, the function values on that subinterval are significantly larger
than the minimum of the function over $[a,b]$.  This is reflected in the definition of
$\widehat{\ci}_{\pm}$ in Step \ref{stagemin1} of Algorithm $M$.

Our definition of $\tL(x)$ reflects these two reasons. Let $x_*$ be some place
where the minimum of $f$ is obtained, i.e.,
$f(x_*)  = \min_{a \le x \le b} f(x).$
Let
\begin{equation}\label{eqn:defofltx}
\tL(x) = \min\bigl(L(x),\chL(x)\bigr), \qquad x \in [a,b],
\end{equation}
where $L(x)$ is defined above in \eqref{eqn:defoflx},
\begin{multline}\label{eqn:defoflt1x}
\chL(x) = \min \Biggl\{ l \in \mathbb{N}_0 :  \biggl\{ \biggl[\frac 18 \fC\left(3h_l\right) + 2
\biggr]   \norm[\tilde{I}_{x,l}]{f''} + \frac18\norm[{I_{x_*,l}}]{f''} \biggr\} h_{l}^2 \\ \qquad
\qquad +  2 \abs{f'(x)} h_{l} + [f(x_*) - f(x)] \le 0 \Biggr\},
\end{multline}
and $\tilde{I}_{x,l}$ is similar to $\Ixl$, but with generally seven times the width:
\begin{equation*}
\tilde{I}_{x,l}=\left[a+\max(0,j-4)h_l, a+ \min(j+3,2^l n_{\ninit})h_l\right] \supset \Ixl,
\end{equation*}
with the same $j$ as in \eqref{Ixldef} above.

Note that $\chL(x)$ does not depend on $\abstol$, whereas $L(x)$ does. As is the
case with $L(x)$, both $\chL(x)$ and $\tL(x)$ depend on $f$, although this
dependence is suppressed in the notation.

\begin{theorem}\label{thm:Mcost}
	Denote by $\cost(M,f,\abstol)$ the number of functional evaluations required by
	$M(f,\abstol)$. This computational cost is bounded as follows:
	\begin{equation*}
	\cost(M,f,\abstol) \le \frac{1}{h_0}\int_a^b 2^{\tL(x)} \, \dif x +1, \\
	\end{equation*}
	where $\tL(x)$ is defined in~\eqref{eqn:defofltx}.
\end{theorem}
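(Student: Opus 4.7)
The plan is to mirror the argument for Theorem~\ref{thm:cost}, but with two stopping criteria that can prevent the subinterval containing~$x$ from being split further. First I would define $\tell(x)$ as in the proof of Theorem~\ref{thm:algMworks}, so that $I_{x,\tell(x)}$ is the final subinterval of Algorithm~$M$ containing~$x$. Exactly as at the end of the proof of Theorem~\ref{thm:cost}, one then has
\begin{equation*}
\cost(M,f,\abstol) = n+1 = 1 + \frac{1}{h_0}\int_a^b 2^{\tell(x)}\,\dif x,
\end{equation*}
so the result follows once one establishes the pointwise inequality $\tell(x) \le \tL(x) = \min(L(x),\chL(x))$ for every $x\in[a,b]$.

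Since $\tL(x)\le L(x)$, the inequality $\tell(x)\le L(x)$ is obtained verbatim from the proof of Theorem~\ref{thm:cost}: at iteration $l=L(x)$ the chain~\eqref{oerrbd} shows $\oerr_{i+k}\le\abstol$ for $k=-2,-1,0,1$, so the index of the subinterval containing $x$ fails to enter any $\widetilde{\ci}_\pm$ and \emph{a fortiori} fails to enter any $\widehat{\ci}_\pm$. Hence no further splitting of that subinterval occurs at level~$L(x)$.

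The new content is the inequality $\tell(x)\le\chL(x)$, which I would prove by contradiction. Suppose the subinterval $I_{x,\chL(x)}=[x_{i-1},x_i]$ is about to be split at iteration $l:=\chL(x)$. Then there must exist $s\in\{+,-\}$ for which either $\herr_{i,s}>\abstol$ or $\herr_{i-s3,-s}>\abstol$. Writing
\begin{equation*}
\herr_{i,s} = \oerr_i + \bigl[\hM_l-f(x_*)\bigr] + \bigl[f(x_*)-f(x)\bigr] + \bigl[f(x)-\min\bigl(f(x_{i-s2}),f(x_{i-s1})\bigr)\bigr],
\end{equation*}
I would bound the four pieces separately: the divided-difference inequality~\eqref{NDDbdm} gives $\oerr_i\le\frac{1}{8}\fC(3h_l)\norm[\tilde{I}_{x,l}]{f''}h_l^2$; a Taylor expansion around~$x$, using that $x_{i-s1}$ and $x_{i-s2}$ lie within $2h_l$ of $x$ and that $\abs{f'(\eta)-f'(x)}\le\norm[\tilde{I}_{x,l}]{f''}\cdot 2h_l$, yields $f(x)-\min(f(x_{i-s2}),f(x_{i-s1}))\le 2\abs{f'(x)}h_l+2\norm[\tilde{I}_{x,l}]{f''}h_l^2$; and a Taylor expansion around the interior minimiser~$x_*$ (where $f'(x_*)=0$) gives $\hM_l-f(x_*)\le\frac{1}{8}\norm[I_{x_*,l}]{f''}h_l^2$. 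Summing reproduces exactly the left-hand side of the inequality defining $\chL(x)$ in~\eqref{eqn:defoflt1x}, and by minimality of $\chL(x)$ this sum is non-positive, contradicting $\herr_{i,s}>\abstol$. The case $\herr_{i-s3,-s}>\abstol$ is handled by the same estimate applied to the neighbouring index (this is the reason $\tilde{I}_{x,l}$ is widened to seven subintervals in its definition, so it covers both~$x_i$'s and $x_{i-s3}$'s neighbourhoods).

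The main obstacle is the Taylor bound $\hM_l-f(x_*)\le\frac{1}{8}\norm[I_{x_*,l}]{f''}h_l^2$ around~$x_*$. This requires certifying that, at iteration~$l=\chL(x)$, the partition already contains a sample point within $h_l/2$ of $x_*$, i.e.\ that the subinterval containing~$x_*$ has indeed been refined down to width~$h_l$; the boundary case $x_*\in\{a,b\}$ is handled separately because $a$ and $b$ are always in the partition, while the case where the interval around $x_*$ has already been declared ``done'' at a coarser level is reduced to the previous case via monotonicity of $\hM_l$ in~$l$ and the fact that $\oerr$ dominates the $\herr$-test near~$x_*$. Once this auxiliary fact is established, the cost bound follows directly.
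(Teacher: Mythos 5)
Your architecture is the paper's: reduce the cost bound to the pointwise inequality $\tell(x)\le\tL(x)$, get $\tell(x)\le L(x)$ for free from the proof of Theorem~\ref{thm:cost}, and prove $\tell(x)\le\chL(x)$ by contradiction using a three-part decomposition of $\herr$ (the $\oerr$ term, the gap $\hM_l - f(x_*)$, and the Taylor comparison of $\min\bigl(f(x_{i-s2}),f(x_{i-s1})\bigr)$ with $f(x)$). The first and third pieces are handled essentially as in the paper. But there is a genuine gap in the second piece: the bound $\hM_l - f(x_*)\le\frac18\norm[{I_{x_*,l}}]{f''}h_l^2$ that you need is \emph{false in general}, and the auxiliary fact you propose to rescue it --- that at iteration $l=\chL(x)$ the partition already contains a sample point within $h_l/2$ of $x_*$ --- does not hold. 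Near $x_*$ one has $\hM\le\min\bigl(f(x_{i_*-1}),f(x_{i_*})\bigr)$, so the $\herr$-test there reduces to the $\oerr$-test, and Algorithm~$M$ retires the subinterval containing $x_*$ as soon as $\oerr\le\abstol$, i.e.\ at some level $l_*$ that is governed by $\abstol$ and can be strictly smaller than $\chL(x)$. Once that subinterval is retired it is never refined again, so the sample spacing near $x_*$ stays at $h_{l_*}>h_{\chL(x)}$ and the only available guarantee is $\hM_{l_*}\le f(x_*)+\abstol$ (from the stopping criterion, as in~\eqref{herrtogether} combined with~\eqref{fxstarlowbd}); no amount of ``monotonicity of $\hM_l$'' recovers a bound at scale $h_{\chL(x)}^2$.

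The repair is cheap and is exactly what the paper does: prove the weaker two-case bound $\hM_{\chL(x)}\le f(x_*)+\frac18 h_{\chL(x)}^2\norm[{I_{x_*,\chL(x)}}]{f''}+\abstol$ (case $l_*<\chL(x)$ gives $f(x_*)+\abstol$; case $l_*=\chL(x)$ gives $f(x_*)+\frac18 h_{\chL(x)}^2\norm[{I_{x_*,\chL(x)}}]{f''}$). The extra $\abstol$ is harmless because the contradiction hypothesis is $\herr>\abstol$, not $\herr>0$: summing the three bounds gives $\abstol<\herr\le\abstol+\{\cdots\}h_{\chL(x)}^2+2\abs{f'(x)}h_{\chL(x)}+[f(x_*)-f(x)]\le\abstol$ by the definition of $\chL(x)$ in~\eqref{eqn:defoflt1x}. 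Two further remarks. First, you should obtain~\eqref{fxstarlowbd} from the linear-spline error bound~\eqref{appxerrbda} rather than from a Taylor expansion at $x_*$ with $f'(x_*)=0$; this gives the factor $\frac18$ directly, needs no point within $h/2$ of $x_*$, and makes your separate treatment of boundary minimisers unnecessary. Second, when you pass to the neighbouring index (your ``$\herr_{i-s3,-s}$'' case, the paper's list of six quantities $\herr_{i+2,+},\dots,\herr_{i-3,-}$), you also need to confirm that the adjacent subintervals entering the Taylor estimate all still have width $h_{\chL(x)}$; the paper does this by observing that, e.g., $\herr_{i+2,+}>\abstol$ forces $i-1\in\widetilde{\ci}_-$ and hence equal spacing for $j=(i-1)\!:\!(i+3)$, which is the real reason $\tilde{I}_{x,l}$ is widened to seven subintervals.
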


\begin{proof}

Using the same argument as for Theorem \ref{thm:cost}, we only need to show that
$\tell(x) \le \tL(x)$ for all $x \in [a,b]$.  At each iteration of Algorithm $M$, the index sets
$\ci_{\pm}$ are both subsets of $\ci$ for the corresponding iteration of Algorithm $A$.
Thus $\tell(x)
\le L(x)$ by the same argument as used to prove Theorem \ref{thm:cost}.  We
only need to show that $\tell(x) \le \chL(x)$.
	
We will show that $\chL(x) < \tell(x) \le L(x)$ for any fixed $x$  leads to a contradiction.
If $\chL(x) < \tell(x)$, then at the $\chL(x)^{\text{th}}$  iteration,
$I_{x,\chL(x)} = [x_{i-1},x_i]$ for some $i$ must be split in Step~\ref{stagemin2} of $M$,
where $x_i-x_{i-1}= h_{\chL(x)} = h_0 2^{-\chL(x)}$.  This means that one or more of the
following must exceed $\abstol$:
\[
\herr_{i+2,+} , \ \herr_{i+1,+}, \ \herr_{i,+}, \  \herr_{i-1,-}, \ \herr_{i-2,-}, \ \herr_{i-3,-}.
\]
We prove that $\herr_{i+2,+} > \abstol$ is impossible.  The arguments for the other cases
are similar.

If $[x_{i-1},x_i]$ must be split because $\herr_{i+2,+} > \abstol$, then it is also the case
that $i-1 \in \widetilde{\ci}_-$, and so $\oerr_{i-1} > \abstol$.  In this case
\[
x_{j} - x_{j-1}
= h_{\chL(x)}   \mbox{ for } j=(i-1):(i+3).
\]
This means that $[x_{i-2},x_{i+3}] \in \tilde{I}_{x,l}$. By the same argument used in
\eqref{oerrbd} it can be shown that
\begin{equation}
\oerr_{i+2} \le \frac{1}{8} \fC(3 h_{\chL(x)}) h_{\chL(x)}^2 \norm[\tilde{I}_{x,\chL(x)}]{f''}.
\label{oerrupbd}
\end{equation}
The quantity $\oerr_{i+2}$ is the first term in the definition of $\herr_{i+2,+}$ in Step
\ref{stagemin1} of Algorithm $M$.

Next, we bound $\min\bigl(f(x_{i}),f(x_{i+1}) \bigr)$, which also appears in the
definition of $\herr_{i+2,+}$. As was argued earlier, $[x_{i-1},x_{i+1}] \in
\tilde{I}_{x,\chL(x)}$. Then a Taylor expansion about the arbitrary $x \in [x_{i-1},x_{i}] $
under consideration establishes that
\begin{equation} \label{minfjjm1bd}
\min\bigl(f(x_{i}),f(x_{i+1}) \bigr)
\ge f(x) - 2h_{\chL(x)}\abs{f'(x)} - h_{\chL(x)}^2 \norm[\tilde{I}_{x,\chL(x)}]{f''} .
\end{equation}
since $\abs{x_i - x} \le \abs{x_{i+1} - x} \le 2h_{\chL(x)}$.

Finally, we bound $ \hM_{\chL(x)}$. Let $x_*$ be a point where $f$ attains its minimum,
and
let $I_{x_*,l_*} = [x_{i_*-1}, x_{i_*}]$ be the subinterval in the present partition containing~$x_*$,
where $l_* \le \chL(x)$. By
\eqref{appxerrbda} it follows that
\begin{equation} \label{fxstarlowbd}
f(x_*) \ge \min(f(x_{i_*-1}),f(x_{i_*})) - \frac18 h_{l_*}^2\norm[I_{x_*,l_*}]{f''}.
\end{equation}
There are two possibilities regarding $l_*$. If $l_* < \chL(x)$, then by the argument in  in
\eqref{herrtogether} used to prove Theorem \ref{thm:algMworks},
\begin{align*}
\hM_{\chL(x)}   & \  \le \ \hM_{l_*}
\ \le \  \min(f(x_{i_*-1}),f(x_{i_*})) - \frac18 h_{l_*}^2 \norm[{[x_{i_*-1}, x_{i_*}]}]{f''}  + \abstol  \\
& \le  f(x_*) + \abstol \qquad \text{by}~\eqref{fxstarlowbd} .
\end{align*}
Otherwise, if $l_* = \chL(x)$, then
\begin{equation*}
\hM_{\chL(x)}   \le \min(f(x_{i_*-1}),f(x_{i_*}))  \le  f(x_*) + \frac18 h_{\chL(x)}^2
\norm[{I_{x_*,\chL(x)}}]{f''} \quad \text{by}~\eqref{fxstarlowbd} .
\end{equation*}
Thus, in either case we have
\begin{equation} \label{Mhatbd}
\hM_{\chL(x)} \le f(x_*) + \frac 18 h_{\chL(x)}^2  \norm[{I_{x_*,\chL(x)}}]{f''} + \abstol.
\end{equation}

Combining the three inequalities \eqref{oerrupbd}, \eqref{minfjjm1bd}, and
\eqref{Mhatbd} yields the inequality that allows us to contradict the assumption
that $\tell(x) > \chL(x)$:
\begin{align*}
\abstol & < \herr_{i+2,+} \qquad \text{by assumption} \\
& = \oerr_{i+2} + \hM_{\chL(x)} - \min\bigl(f(x_{i}),f(x_{i+1})\bigr) \qquad \text{by Step
\ref{stagemin1} of Algorithm $M$} \\
& = \frac{1}{8} \fC(3 h_{\chL(x)}) h_{\chL(x)}^2 \norm[\tilde{I}_{x,\chL(x)}]{f''} + f(x_*) +
\frac 18 h_{\chL(x)}^2  \norm[{I_{x_*,\chL(x)}}]{f''} + \abstol \\
& \quad  - f(x) + 2h_{\chL(x)}\abs{f'(x)} + 2h_{\chL(x)}^2
\norm[\tilde{I}_{x,\chL(x)}]{f''}  \qquad \text{by \eqref{oerrupbd}, \eqref{minfjjm1bd}, and
\eqref{Mhatbd}} \\
& \le \abstol + \biggl\{ \biggl[\frac 18 \fC\left(3 h_{\chL(x)} \right) + 2 \biggr]
\norm[\tilde{I}_{x,\chL(x)}]{f''} + \frac18\norm[{I_{x_*,\chL(x)}}]{f''} \biggr\} h_{\chL(x)}^2 \\
& \qquad \qquad +  2 \abs{f'(x)} h_{\chL(x)} + [f(x_*) - f(x)] \\
& \le \abstol \qquad \text{by \eqref{eqn:defoflt1x}}.
\end{align*}
This gives a contradiction and completes the proof.
\end{proof}

If $f(x)$ is close to the minimum function value, $f(x_*)$, for $x$ in much of
$[a,b]$, then $\chL(x)$ may be quite large, and $L(x)$  determines the
computational cost of Algorithm $M$. In this case, the computational cost for
minimization is similar to that for function approximation. However, if $f$ attains its
minimum at only a finite number of points, then for vanishing $\varepsilon$,
$\tL(x) = \widehat{L}(x)$ for nearly all $x$, and the computational cost for minimization
is significantly smaller than that for function approximation.

The minimization problem \eqref{optprob} for functions in the whole Sobolev
space $\cw^{2,\infty}$ has a similar lower complexity bound as \eqref{lowbdW}
for the function approximation problem by a similar proof. However, for
functions only in the cone $\cc$, we have not yet derived a lower bound on the
complexity of the minimization problem \eqref{optprob} for functions in $\cc$.

\section{Numerical Examples} \label{sec:examples}

Together with our collaborators, we have developed the Guaranteed Automatic
Integration Library (GAIL) \cite{ChoEtal15a}. This MATLAB software library
implements algorithms that provide answers to univariate and multivariate
integration problems, as well as \eqref{appxprob} and \eqref{optprob}, by
automatically determining the sampling needed to satisfy a user-provided error
tolerance. GAIL is under active development. It implements our best adaptive
algorithms and upholds the principles of reproducible and reliable computational
science as elucidated in Choi et al.~\cite{Cho13,Cho14a}. We have adopted practices including input parsing, extensive testing, code comments, a user
guide~\cite{Gail_ug}, and case studies. Algorithms $A$ and $M$ described here are implemented
as GAIL functions \funappxg{} and \funming, respectively in GAIL version 2.2. The following examples
showcase the merits and drawbacks of our algorithms. We compare them to the
performance of algorithms in MATLAB and the Chebfun toolbox.

Chebfun~\cite{TrefEtal16a} is a MATLAB toolbox that approximates functions in
terms of a Chebyshev polynomial basis, in principle to machine precision
($\approx 10^{-15}$) by default. In this example, we show that it fails to reach
its intended error tolerance for the function~$f_1$ defined in \eqref{f3def}
with $-c = 0.2 = \delta$.
Figure~\ref{f3chebfig}(a) shows the absolute errors of Chebfun's approximation to
$f_1$  with an input error tolerance $10^{-12}$, and the ``splitting'' option  turned on to allow
Chebfun to construct a piecewise polynomial interpolant if derivative
discontinuities are detected.  However, Chebfun produces some pointwise errors computed at a partition of $[-1,1]$ with even subinterval length $10^{-5}$ to be greater
than~$10^{-5}$.

In contrast, the pointwise errors of the piecewise linear interpolant produced
by \funappxg{} are uniformly below the error tolerance. Unfortunately, the time
taken by \funappxg{} is about $30$ times as long as the time
required by Chebfun.

\begin{figure}[tb]
\centering
\begin{tabular}{cc}
\includegraphics[width=5.7cm]{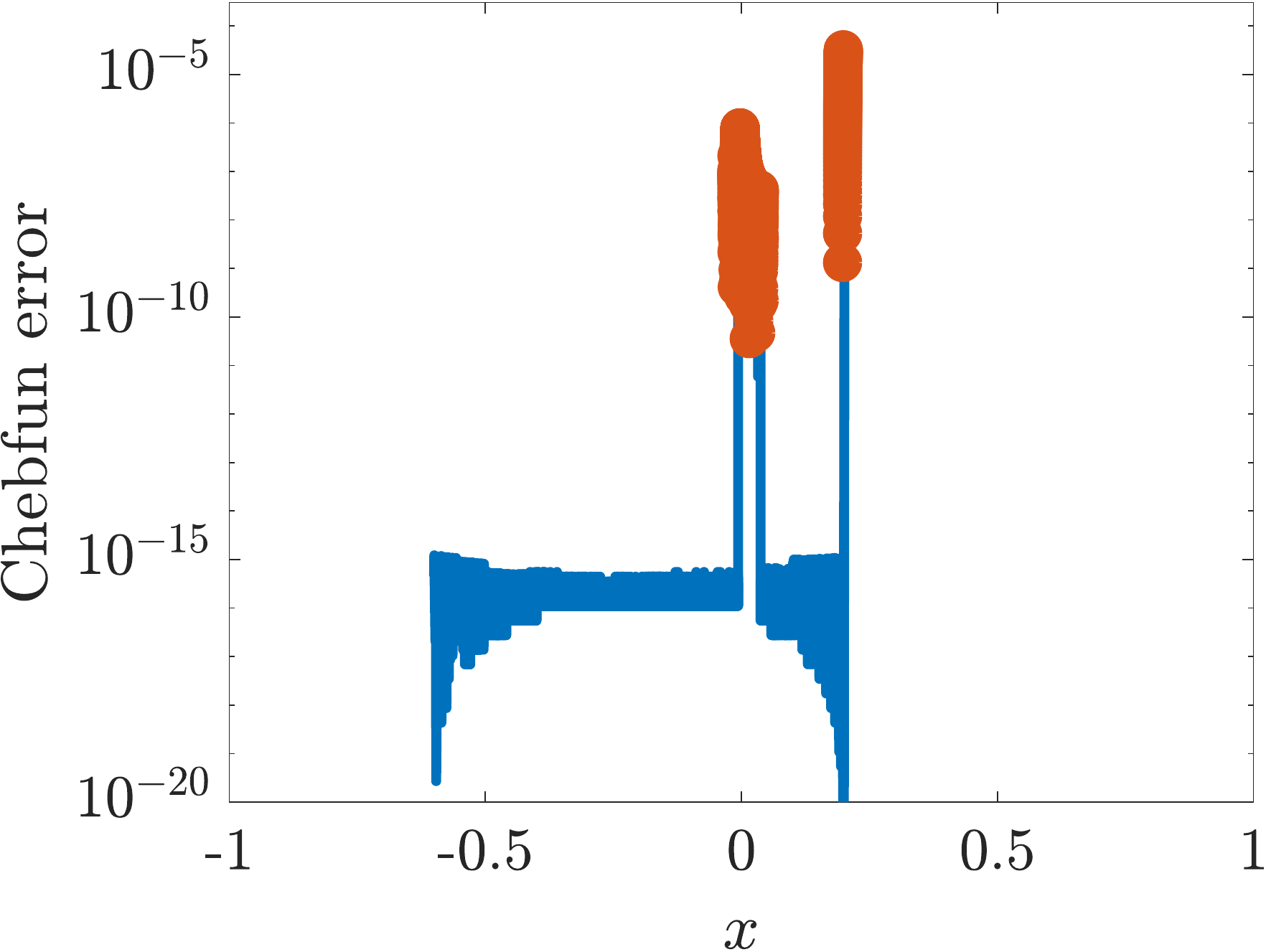} \hspace{-2.5ex} &
\includegraphics[width=5.7cm]{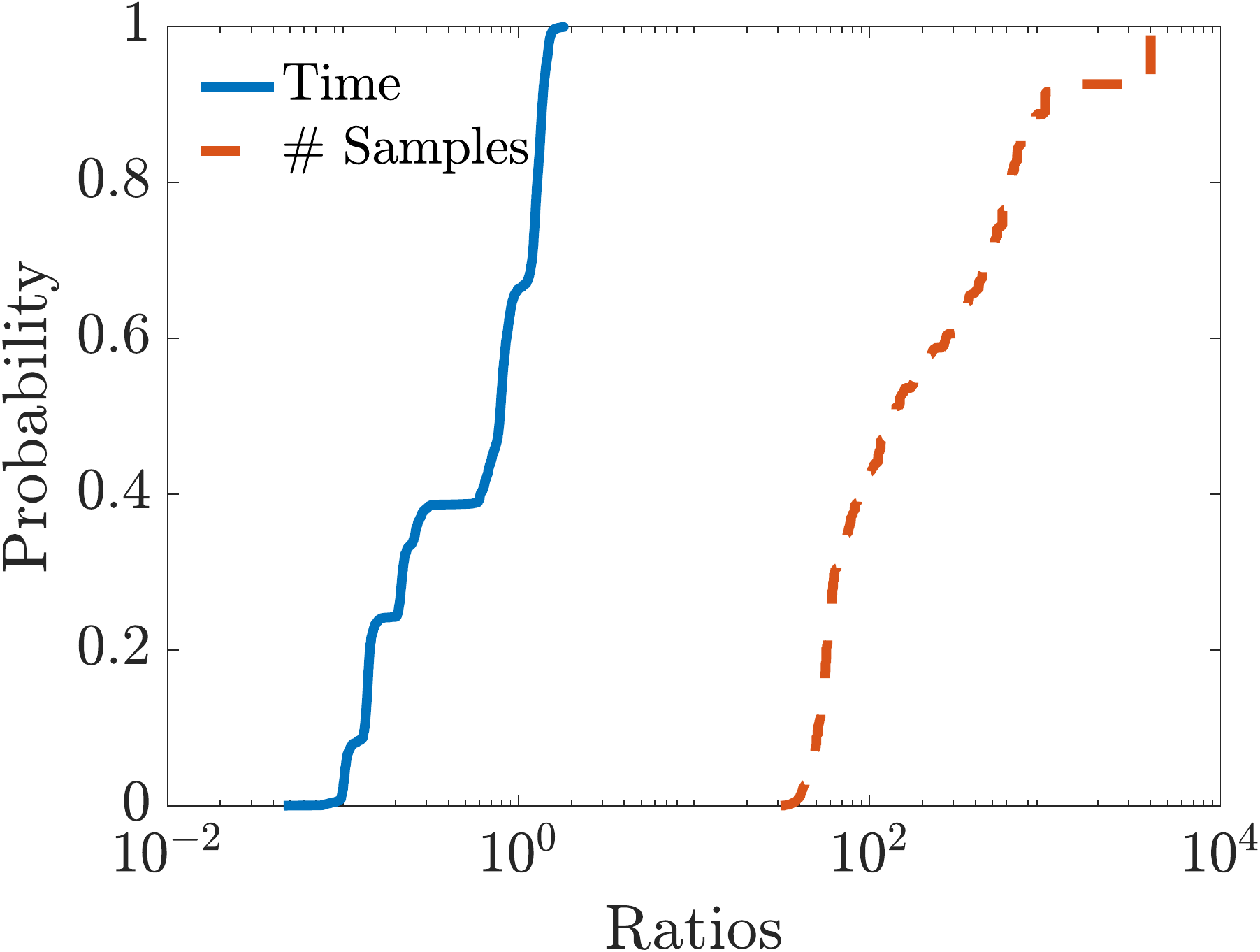}
\\ (a) & (b)
\end{tabular}
\caption{(a) The approximation errors for $f_1(x)$, $x \in [-1,1]$, with $-c = 0.2=\delta$
using  Chebfun with an error tolerance of $10^{-12}$.
(b) An empirical distribution function of performance ratios based on 1000
simulations for each test function in \eqref{eq:testfunctions}:
\funappxg{} time $/$ Chebfun time (solid), \funappxg{} \# of samples $/$
Chebfun \# of samples (dashed).
The data for this figure is conditionally reproducible
by
\texttt{funappx\_g\_test.m}  and  \texttt{LocallyAdaptivePaperFigs.m} in GAIL.
\label{f3chebfig}} 
\end{figure}



Next, we compare our adaptive algorithms with Chebfun  for random samples from the
following families of test functions defined on $ [-1, 1]$:
\begin{subequations} \label{eq:testfunctions}
\begin{align}
f_1(x) & \quad \text{defined in \eqref{f3def}}, \quad \delta = 0.2, \quad \ c \sim \cu[0,0.6], \\
f_2(x) &= x^4 \sin(d/x), \hspace{15.5ex} d \sim \cu[0,2], \\
f_3(x) &= 10  x^2 + f_2(x),
\end{align}
\end{subequations}
where  $\cu[a,b]$ represents a uniform distribution over $[a,b]$.
We set $n_{\ninit}= 250$, $\fC (h) =
10 \fh/(\fh-h)$, and $\abstol = 10^{-6}$. Our new algorithm \texttt{funappx\_g}
and Chebfun are
used to approximate $1000$ random test functions from each family. For Chebfun we
override the default tolerance to $10^{-6}$, and switch on the splitting
feature to allow piecewise Chebyshev polynomials for approximation.  Success  is
determined by whether a discrete approximation to the $L^{\infty}$ error is
no greater than the error tolerance.

\begin{table}[bt]
	\centering
	\caption{Comparison of number of sample points, computational time,  and success
		rates of \funappxg  
		and Chebfun in upper table;
		\funming, \fminbnd, and Chebfun's \texttt{min} in lower table.
		This table is conditionally reproducible by
		\texttt{funappx\_g\_test.m} and \texttt{funmin\_g\_test.m}  in
		GAIL.\label{tab:localVsGlobalVsChebfun}}
	\medskip
	{\footnotesize
		\setlength{\tabcolsep}{0.35em} 
		\begin{tabular}{crrrrrrrrrrrrrrrrrrrrrrrrrrrrrrrr}	
			\toprule	
			& \multicolumn{7}{c}{\bf Mean \# Samples}
			& \multicolumn{7}{c}{\bf Mean Time Used}
			& \multicolumn{7}{c}{\bf Success (\%)}
			\\ \midrule
			&
			\multicolumn{3}{c}{\funappxg}  & \multicolumn{3}{c}{Chebfun} &&
			\multicolumn{3}{c}{\funappxg}  &  \multicolumn{3}{c}{Chebfun} &&
			\multicolumn{3}{c}{\funappxg}  &   \multicolumn{3}{c}{Chebfun}
			\\ \toprule
			$f_1$  &&
			6557 &&& 116  &&&&	0.0029  &&& 0.0205  &&&& 100  &&& 0
			\\
			$f_2$  &&
			5017  &&&  43 &&&&  0.0031  &&& 0.0051 &&&& 100  &&& 3
			\\
			$f_3$  &&
			15698 &&& 22  &&&& 0.0049  &&& 0.0036  &&&& 100  &&&  3 	
			\\ \bottomrule
			 &
			\multicolumn{3}{c}{\funming} &  \multicolumn{3}{c}{\fminbnd}  &
			\multicolumn{1}{c}{\texttt{min}} &
			\multicolumn{3}{c}{\funming}  &  \multicolumn{3}{c}{\fminbnd }  &
			\multicolumn{1}{c}{\texttt{min} }  &
			\multicolumn{3}{c}{\funming} & \multicolumn{3}{c}{\fminbnd} &
			\multicolumn{1}{c}{\texttt{min}}
			\\ \toprule
			$-f_1$   &&
			111&&&  8 && 116 &&  0.0029 &&& 0.0006 && 0.0256 &&
			100 &&& 100 && 14
			\\ $\phantom{-}f_2$ &&
			48 &&& 22 && 43 && 0.0028 &&& 0.0007 && 0.0063 &&
			100 &&& 27 && 60
			\\ $\phantom{-}f_3$ &&
			108 &&& 9 && 22 && 0.0028 &&& 0.0007 && 0.0037 &&
			100 &&&  100 && 35
			\\ \bottomrule
		\end{tabular}
	}
\end{table}

We see in Table~\ref{tab:localVsGlobalVsChebfun} that \funappxg{} obtains
the correct answer in all cases,
even for $f_2$, which is outside the cone $\cc$. Since it is a higher order algorithm,
Chebfun generally uses substantially fewer samples than
\funappxg, but its run time is longer than \funappxg for a
significant proportion of the cases; see Figure~\ref{f3chebfig}(b). Moreover, Chebfun
rarely approximates the test functions satisfactorily.

Similar simulation tests have been run to compare our \funming, MATLAB's \fminbnd, and
Chebfun's \texttt{min}, but this time $n_{\ninit} = 20$ for \funming. The results are
summarized in the lower half of
Table~\ref{tab:localVsGlobalVsChebfun}. Our \funming{} achieves~100\%
success for all families of test functions with substantially fewer sampling
points and run time than \funappxg. This is because \funming does not
sample densely where the function is not close to its minimum value. Although MATLAB's
\fminbnd uses far fewer function values than \funming, it cannot locate the
global minimum (at the left boundary) for about 70\% of the $f_2$ test cases.
Chebfun's {\tt min} uses fewer points than \funming, but Chebfun is slower and
less accurate than \funming for these tests.


\section{Discussion}

Adaptive and automatic algorithms are popular because they require only a
(black-box) function and an error tolerance. Such algorithms exist in a
variety of software packages. We have highlighted those found in MATLAB and
Chebfun  because they are among the best. However, as we have shown
by numerical examples, these algorithms may fail. Moreover,
there is no theory to provide necessary conditions for failure, or equivalently,
sufficient conditions for success.

Our Algorithms $A$ (\funappxg) and $M$ (\funming) are locally adaptive and have
sufficient conditions for success. Although it may be difficult to
verify those conditions in practice, the theory behind these algorithms provide
several advantages:
\begin{itemize}
	
\item The cone, $\cc$, is intuitively explained as a set of functions whose  second
derivatives do not change drastically over a small interval.
This intuition can guide the user in setting the parameters defining~$\cc$, if
 desired.
	
\item The norms of $f$ and its derivatives appearing in the upper bounds of computational cost
in Theorems \ref{thm:cost} and \ref{thm:Mcost} may be unknown, but
these theorems explain how the norms influence the time required by
our algorithms.
	
\item Our Algorithm $A$ has been shown to be asymptotically optimal for the
complexity of the function approximation problem \eqref{appxprob}.
	
\end{itemize}

The minimum horizontal scale of functions in $\cc$ is roughly $1/n_{\ninit}$. The
computational cost of our algorithms is at least $n_{\ninit}$, but $n_{\ninit}$ is not a
multiplicative factor. Increasing $n_{\ninit}$ makes our new algorithms more robust, and it may
increase the minimum number of sample points and 
computational cost, if any, only mildly.

As mentioned in the introduction, there are general theorems providing sufficient
conditions under which adaption provides
no advantage. Our setting fails to satisfy those conditions because $\cc$ is not
convex. One may average two mildly spiky functions in $\cc$---whose spikes have
opposite signs and partially overlap---to obtain a very spiky function outside
$\cc$.

Nonadaptive algorithms are unable to solve \eqref{appxprob} or
\eqref{optprob} using a finite number of function values if the set of
interesting functions, $\cc$, is a cone, unless there exist nonadaptive
algorithms that solve these problems exactly. Suppose that some nonadaptive,
algorithm~$A$ satisfies \eqref{appxprob} for some cone~$\cc$, and that for an error
tolerance $\abstol$, this algorithm~$A$ requires $n$ function values. For any positive
$c$,
define $A^*(f,\abstol) = A(cf,\abstol)/c$ for all $f \in \cc$. Then $\norm{f -
A^*(f,\abstol)} = \norm{cf - A(cf,\abstol)}/c\le \abstol/c$ for all $f \in \cc$
since $cf$ is also in $\cc$. Thus, $A^*$ satisfies \eqref{appxprob} for error
tolerance $\abstol/c$, using the same number of function values as $A$.
Making $c$ arbitrarily large establishes the existence of a nonadaptive
algorithm that solves \eqref{appxprob} exactly.

Our algorithms do not take advantage of higher orders of
smoothness that the input function may have. We view the present work as a
stepping stone to developing higher order algorithms. Nonlinear splines
or higher degree polynomials, such as those used
in Chebfun, are potential candidates.

\section*{Acknowledgments}

We dedicate this article to the memory of our colleague Joseph F. Traub, who
passed away on August 24, 2015. He was a polymath and an influential figure in
computer science and applied mathematics. He was the founding Editor-in-Chief of
the Journal of Complexity and we are grateful to his tremendous impact and lifelong
service to our research community.

\begin{FJHchange}
	We thank Greg Fasshauer, Erich Novak, the GAIL team, and two anonymous referees
	for their valuable comments and suggestions. This research was supported in part by
	grants NSF-DMS-1522687 and NSF-DMS-0923111.
\end{FJHchange}


\bibliography{FJH23,FJHOwn23}

\end{document}